\pgfplotsset{compat=1.15}
\newcommand{\matop}[1]{\mathbf{#1}}  
\newcommand{\mathvec}[1]{\boldsymbol{{#1}}} 
\newcommand{\eye}[1]{\mathbf{I}_{\scriptscriptstyle{#1}}}  
\def\gramcc{\matop{G}_{\scriptscriptstyle\Delta_j\Delta_j}}
\def\gramcf{\matop{G}_{\scriptscriptstyle\Delta_j\nabla_j}}
\def\predictor{\matop{P}}
\def\mgard{\mathsf{MGARD}}
\DeclareMathOperator{\diag}{diag}
\DeclareMathOperator{\spann}{span}
\newcommand*{\mathreflect}[1]{\binrel@{#1}\binrel@@{\mathpalette\math@reflect{#1}}}
\newcommand*{\math@reflect}[2]{\reflectbox{\m@th$#1#2$}}
\newcommand*{\mathrotate}[3][]{\binrel@{#3}\binrel@@{\vphantom{#3}\mathpalette\math@rotate{{#1}{#2}{#3}}}}
\newcommand*{\math@rotate}[2]{\math@@rotate#1#2}
\newcommand*{\math@@rotate}[4]{
  \sbox\z@{$\m@th#1#4$}%
  \smash{\makebox[\wd\z@]{\rotatebox[#2]{#3}{$\m@th#1#4$}}}%
}
\newcommand*{\obslash}{\mathreflect{\oslash}}
\newcommand*{\overt}{\mathrotate[origin=c]{90}{\ominus}}
\newcommand{\smallbullet}{} 
\DeclareRobustCommand\smallbullet{%
  \mathord{\mathpalette\smallbullet@{0.7}}%
}
\newcommand{\smallbullet@}[2]{%
  \vcenter{\hbox{\scalebox{#2}{$\m@th#1\bullet$}}}%
}
\newtheorem{theorem}{Theorem}[section]
\newtheorem{lemma}[theorem]{Lemma}
\theoremstyle{definition}
\newtheorem{definition}[theorem]{Definition}
\newtheorem{remark}[theorem]{Remark}
\title[Lifting MGARD: construction of (pre)wavelets on the interval]
{Lifting MGARD: construction of (pre)wavelets on the interval using polynomial predictors of arbitrary order} 
\author[V. Reshniak, E. Ferguson, Q. Gong, N. Vidal, R. Archibald and S. Klasky]{}
\subjclass{Primary: 65T60; Secondary: 42C40.}
\keywords{data compression, multilevel decomposition, wavelet transform, lifting scheme, polynomial predictors.}
\thanks{This manuscript has been authored by UT-Battelle, LLC, under contract DE-AC05-00OR22725 with the US Department of Energy (DOE). The US government retains and the publisher, by accepting the work for publication, acknowledges that the US government retains a non-exclusive, paid-up, irrevocable, world-wide license to publish or reproduce the submitted manuscript version of this work, or allow others to do so, for US government purposes. DOE will provide public access to these results of federally sponsored research in accordance with the DOE Public Access Plan (http://energy.gov/downloads/doe-public-access-plan).}
\thanks{$^*$Corresponding author: Viktor Reshniak}
\begin{document}
\maketitle

\centerline{\scshape
Viktor Reshniak$^{{\href{mailto:reshniakv@ornl.gov}{\textrm{\Letter}}}*1}$,
Evan Ferguson$^{{\href{mailto:efergu2@uw.edu}{\textrm{\Letter}}}2}$,
Qian Gong$^{{\href{mailto:gongq@ornl.gov}{\textrm{\Letter}}}1}$,
}
\centerline{\scshape
Nicolas Vidal$^{{\href{mailto:vidaln@ornl.gov}{\textrm{\Letter}}}1}$,
Richard Archibald$^{{\href{mailto:archibaldrk@ornl.gov}{\textrm{\Letter}}}1}$
and Scott Klasky$^{{\href{mailto:klasky@ornl.gov}{\textrm{\Letter}}}1}$
}

\medskip

{\footnotesize
\centerline{$^1$Oak Ridge National Laboratory, Oak Ridge, TN, USA}
} 

\medskip

{\footnotesize
 \centerline{$^2$University of Washington, Seattle, Washington, USA}
}

\bigskip



\begin{abstract}
    MGARD (MultiGrid Adaptive Reduction of Data) is an algorithm for compressing and refactoring scientific data, based on the theory of multigrid methods. 
    The core algorithm is built around stable multilevel decompositions of conforming piecewise linear $C^0$ finite element spaces, enabling accurate error control in various norms and derived quantities of interest.
    In this work, we extend this construction to arbitrary order Lagrange finite elements $\mathbb{Q}_p$, $p \geq 0$, and propose a reformulation of the algorithm as a lifting scheme with polynomial predictors of arbitrary order. 
    Additionally, a new formulation using a compactly supported wavelet basis is discussed, and an explicit construction of the proposed wavelet transform for uniform dyadic grids is described.
\end{abstract}


\section{Introduction}
\label{sec:intro}

Compression of scientific data is known to be particularly challenging as it is widely regarded as effectively incompressible due to the intrinsically high entropy of its floating-point value representation \cite{lakshminarasimhan2013isabela,SAYOOD20061}.
Lossless compressors achieve at most mediocre performance on such datasets \cite{Zhao2020}.
At the same time, scientific data from simulations and experiments is often inherently lossy and can tolerate some error-controlled loss in its accuracy.
Wavelets are particularly appealing for this task because of their excellent localization properties in both spatial and frequency domains \cite{STEPHANE20091}.

Wavelet compression is closely related to the time-scale multiresolution analysis which is rooted in Laplacian pyramids \cite{BURT1987671}, hierarchical filters \cite{Roos1988,Viergever1993}, multirate and polyphase filter banks \cite{fliege1994multirate,Minh2012,Vaidyanathan1990}.
Similarly to the wavelet transform, these methods decompose signals into components at different scales, capturing fine details and low-frequency structures separately.
This approach facilitates efficient encoding by exploiting the sparsity of transformed signals.
Conventional wavelets are constructed with translation invariant filters defined on regular nested dyadic grids, see filter bank diagram in Figure~\ref{fig:filter_bank_lifting}.
Invertibility, smoothness, vanishing moments, and the support size of the wavelet basis functions are the main design criteria commonly imposed in the frequency domain for such regularly sampled signals.

\begin{figure}[tb]
    \centering
    \def\w{0.45\textwidth}
    \def\h{0.37\textwidth}
    \stackinset{c}{0pt}{t}{-12pt}{}{\includegraphics[height=\h]{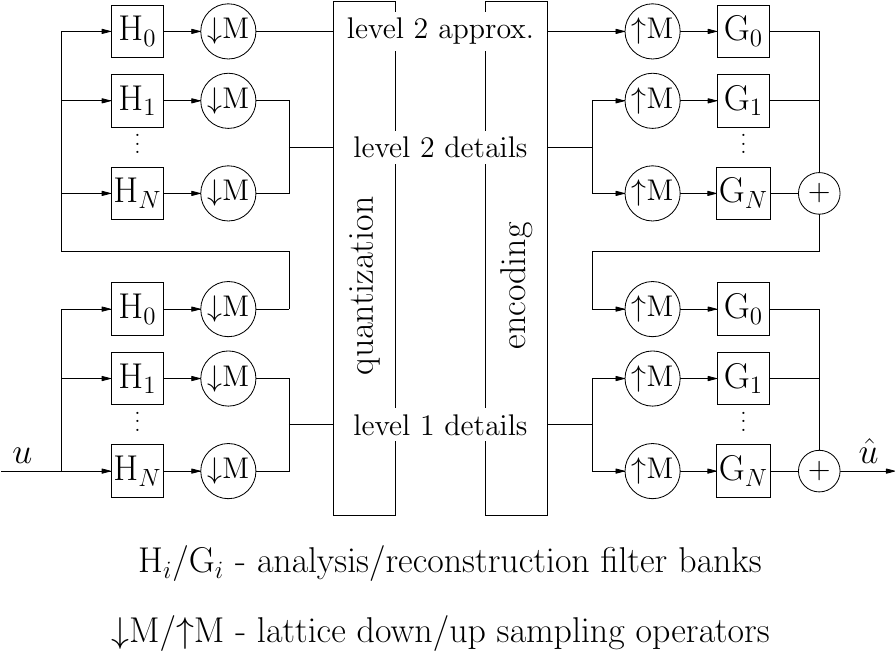}}
    \stackinset{c}{0pt}{t}{-12pt}{}{\includegraphics[height=\h]{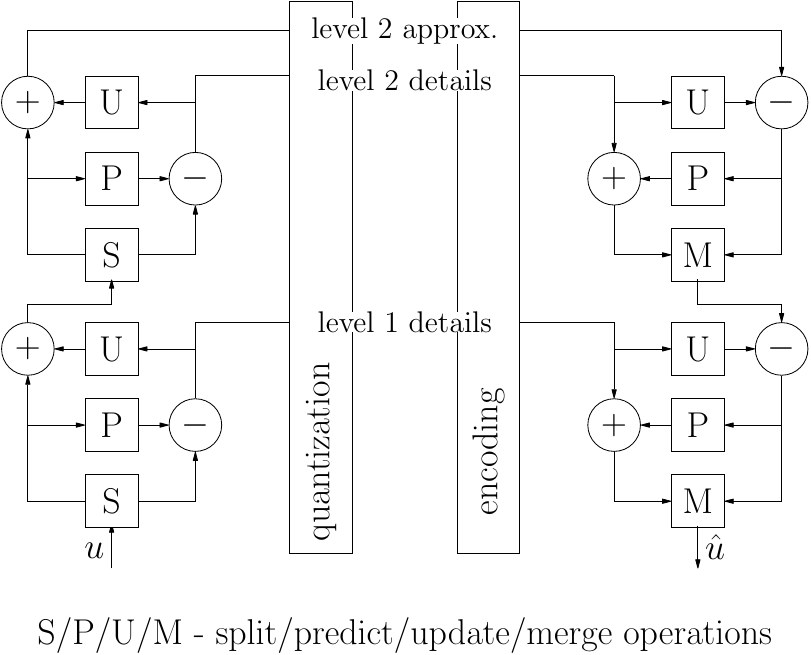}}
    \caption{Schematic illustration of the two-level decomposition/recomposition of a signal using equivalent subband coding filter bank (left) and lifting scheme (right) implementations.}
    \label{fig:filter_bank_lifting}
\end{figure}

Lifting provides an alternative approach for constructing wavelets directly in the signal domain using appropriate linear combinations of scaling functions \cite{Sweldens2000}. 
The scheme comprises the split/merge, predict, and update operations and is invertible by design.
After the signal is split into two components, the values in one are used to predict the values of another, and the prediction error is recorded as the detail coefficients, see Figure~\ref{fig:filter_bank_lifting}.
The quality of predictor is largely responsible for the energy compaction of this decomposition while the update operation is commonly designed to match the vanishing moments of the primal/reconstruction and dual/analysis wavelets, the property generally leading to better stability of the wavelet transform.
Factorization of classical wavelet decompositions into lifting steps also provides more efficient implementation over the filter bank construction~\cite{daubechies1998factoring}.
As a result, lifting is used as a coding algorithm in wavelet-based compressors including JPEG2000~\cite{Taubman2002} and SPECK~\cite{Pearlman2004} for size-bounded compression, Waverange~\cite{Kolomenskiy2022-po} and SPERR~\cite{Sperr2023} for error-controlled compression of structured scientific data, and HexaShrink~\cite{Peyrot2019} for compression of structured hexahedral volume meshes.

On a more abstract note, the construction of wavelets is intricately tied to the concept of stable decomposition of function spaces.
Considering a family of projectors $\mathcal{P}_j: V \to V_j$ with $V_{j}\subset V_{j+1} \subset V$ and $\mathcal{P}_{j}\mathcal{P}_{j+1}=\mathcal{P}_{j}$, an element $u\in V$ possesses a multiscale representation
\begin{align}\label{eq:multiscale_repr}
    u 
    = \sum_{j=0}^\infty (\mathcal{P}_j-\mathcal{P}_{j-1})u
    = \mathcal{P}_0u + \sum_{j=0}^\infty \mathcal{Q}_j u,
    \qquad
    \mathcal{Q}_j = \mathcal{P}_{j+1} - \mathcal{P}_{j}: V \to W_j.
\end{align}
The operator $\mathcal{Q}_j$ is also a projector since $\mathcal{Q}_j^2 = \mathcal{P}_{j+1}^2 - \mathcal{P}_{j+1}\mathcal{P}_{j} - \mathcal{P}_{j}\mathcal{P}_{j+1} + \mathcal{P}_{j}^2 = \mathcal{P}_{j+1} - \mathcal{P}_{j}$.
Consequently, since $\mathcal{P}_j(\mathcal{P}_{j+1}-\mathcal{P}_{j})=0$, the element $\mathcal{Q}_ju\in W_j$ can be viewed as the detail complement of $\mathcal{P}_{j}u\in V_{j}$ in $V_{j+1}$ leading to the direct sum decomposition of the space $V$ 
\begin{align*}
    V = V_0\bigoplus_{l=0}^{\infty}W_l.
\end{align*}
The choice of appropriate projectors is crucial to obtain decompositions with certain desirable properties such as the polynomial exactness of $V_j$ or the stability of the representation \eqref{eq:multiscale_repr} written in terms of the bases of the approximation and detail subspaces $V_j:=\spann \{\varphi_{j,k}:k\in\Delta_j\}$, $W_j:=\spann \{\psi_{j,k}:k\in\nabla_j\}$ for some appropriate index sets $\Delta_j/\nabla_j$
\begin{align}\label{eq:stability}
    u &= \sum_{k\in\Delta_0} \alpha_{0,k} \varphi_{0,k} + \sum_{j\geq0}\sum_{k\in\nabla_j} \beta_{j,k}\psi_{j,k}
    \qquad
    \text{s.t.}\quad \|u\|_V \sim \vvvert\boldsymbol{\alpha}\cup\boldsymbol{\beta}\vvvert,
\end{align}
where $\vvvert\cdot\vvvert$ is a corresponding discrete norm and $\|\cdot\|\sim\vvvert\cdot\vvvert$ denotes the usual norm equivalence, i.e., the existence of finite constants $\gamma,\Gamma$ with $\gamma\vvvert\boldsymbol{\alpha}\cup\boldsymbol{\beta}\vvvert \leq \|u\|_V \leq \Gamma\vvvert\boldsymbol{\alpha}\cup\boldsymbol{\beta}\vvvert.$

In the case of $V$ being a Hilbert space, a suitable candidate for $\mathcal{P}_j$ is an oblique projector defined as
\begin{align}\label{eq:projector}
    \mathcal{P}_ju = \sum_{k\in\Delta_j}\langle u,\tilde{\varphi}_{j,k}\rangle \varphi_{j,k},
\end{align}
where $\tilde{\varphi}$ is biorthogonal to $\varphi$, i.e., $\langle\varphi_{j,k},\tilde{\varphi}_{l,k'}\rangle=\delta_{k,k'}$.
It has been shown that biorthogonality is a necessary condition for the uniform stability of multiscale representations in the sense of the norm equivalence in~\eqref{eq:stability} when $V$ is $L^2(\Omega)$ \cite{cohen2003numerical,Dahmen1995}.
Here and below, $\Omega$ refers to some measure space of interest such as $\mathbb{R}^d$ or its compact subset.
When omitted, $\Omega$ should be understood from the context.
Note that biorthogonality condition eliminates the selection of interpolation operators 
\begin{align}\label{eq:interpolator}
    \mathcal{I}_ju=\sum_{k\in\Delta_j}u_{k}\varphi_{j,k},
    \qquad
    [\varphi_{j,k}]_{k'} = \delta_{k,k'},
\end{align}
as uniformly $L^2$-stable projectors, given that $\tilde{\varphi}_{j,k}$ is a Dirac distribution defined by $\langle u,\tilde{\varphi}_{j,k}\rangle:=u_{j,k}$.
In terms of the lifting scheme, this corresponds to the interpolating predictor without an update operation, underscoring once again the significance of the update operation in achieving stable multiresolution encoding.

Similar results also hold in $L^p$-Besov spaces $V:=B^{t}_{p,q}(\Omega)$ as given by
\begin{theorem}[\protect{\cite[Theorem~3.7.7]{cohen2003numerical}}]\label{th:Besov_norm}
    Assume that $\varphi,\tilde{\varphi}$ are compactly supported biorthogonal scaling functions with $\varphi\in L^r(\Omega)$, $\tilde{\varphi}\in L^{r'}(\Omega)$ for $r\in[1,\infty]$, $1/r+1/r'=1$, or that $\varphi\in C^0(\Omega)$ and $\tilde{\varphi}$ is a Radon measure in which case $r=\infty$.
    Then, for $0<p\leq r$, one has the norm equivalence
    \begin{align*}
        \|u\|_{B^{s}_{p,q}}\sim \|\mathcal{P}_0u\|_{L^p} + \left\|\left\{2^{sj}\|\mathcal{Q}_ju\|_{L^p}:j\geq0\right\}\right\|_{l^q},
    \end{align*}
    for all $s>0$ such that 
    \begin{align}\label{eq:s_range}
        d\left(\frac{1}{p}-\frac{1}{r}\right)<s<\min(t,n+1),
    \end{align}
    where $n$ is the order of polynomial reproduction in $V_j$ and $t$ is such that $\varphi\in B^{t}_{p,q_0}(\Omega)$ for some $q_0$.
\end{theorem}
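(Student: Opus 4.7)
The plan is to establish the norm equivalence via the classical Jackson--Bernstein machinery, which characterizes Besov spaces through the decay rate of best approximation by the multiresolution spaces $V_j$. Concretely, I would show that the quantities $\|\mathcal{P}_0u\|_{L^p}$ and $\bigl(2^{sj}\|\mathcal{Q}_ju\|_{L^p}\bigr)_{j\geq 0}$ control, and are controlled by, the Peetre $K$-functional $K(t,u;L^p,B^{s}_{p,q})$, which in turn is equivalent to the Besov seminorm. The lower bound $s>d(1/p-1/r)$ appears precisely to guarantee that the projector $\mathcal{P}_j$ in \eqref{eq:projector} extends as a uniformly bounded operator $B^{s}_{p,q}\to L^p$ despite $\tilde{\varphi}\in L^{r'}$ requiring test functions with more integrability than $L^p$ provides when $p<r$; a Sobolev-type embedding $B^{s}_{p,q}\hookrightarrow L^r_{\text{loc}}$ together with the compact support of $\tilde{\varphi}$ closes this gap.

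The two main ingredients I would prove separately are, first, a Jackson (direct) estimate
\begin{equation*}
    \|u-\mathcal{P}_ju\|_{L^p}\lesssim 2^{-sj}\,|u|_{B^{s}_{p,q}},
\end{equation*}
obtained by localizing $u$ on the supports of $\varphi_{j,k}$, invoking polynomial reproduction up to degree $n$ in $V_j$ (which forces the upper constraint $s<n+1$), and applying Whitney-type local polynomial approximation in Besov norms; and second, a Bernstein (inverse) estimate
\begin{equation*}
    \|v_j\|_{B^{s}_{p,q}}\lesssim 2^{sj}\,\|v_j\|_{L^p},\qquad v_j\in V_j,
\end{equation*}
obtained from the scale invariance of the basis, finite overlap of supports, and the smoothness hypothesis $\varphi\in B^{t}_{p,q_0}$, which enforces the upper bound $s<t$. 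Combining these with $\mathcal{Q}_j=\mathcal{P}_{j+1}-\mathcal{P}_j$ yields $\|\mathcal{Q}_ju\|_{L^p}\lesssim \min(2^{-sj},\|u\|_{L^p})$ and reciprocally a lower bound on the telescoping sum, from which the $\ell^q(2^{sj})$ norm equivalence follows by the standard discretization of the $K$-functional.

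The step I expect to be the main obstacle is the $L^p$-boundedness of $\mathcal{P}_j$ in the regime $p<r$, because the duality pairing $\langle u,\tilde{\varphi}_{j,k}\rangle$ is not \emph{a priori} defined on $L^p$. The strategy here is to replace the pairing by its Besov-space extension: for $u\in B^{s}_{p,q}$ with $s>d(1/p-1/r)$, local embedding into $L^r$ on each patch $\mathrm{supp}\,\tilde{\varphi}_{j,k}$, with the correct scaling $2^{jd(1/p-1/r)}$, shows that $|\langle u,\tilde{\varphi}_{j,k}\rangle|$ is dominated by a localized Besov seminorm over that patch, and summation using finite overlap yields uniform boundedness of $\mathcal{P}_j$. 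This ``stability of the dual pairing in the Besov scale'' is genuinely where the lower constraint in \eqref{eq:s_range} is unavoidable, and it is the delicate point that forces the precise formulation of the theorem's hypotheses.
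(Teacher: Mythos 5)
This statement is not proved in the paper at all: it is imported verbatim, with citation, from Cohen's book (Theorem~3.7.7 of \cite{cohen2003numerical}), so there is no in-paper argument to compare against. Your outline is the standard route used in that reference: Jackson (direct) estimates from polynomial reproduction of order $n$ (giving $s<n+1$), Bernstein (inverse) estimates from $\varphi\in B^{t}_{p,q_0}$ (giving $s<t$), uniform $L^r$-stability of the biorthogonal projectors, and the characterization of $B^{s}_{p,q}$ via the $K$-functional/approximation spaces; your identification of $s>d(1/p-1/r)$ as the embedding $B^{s}_{p,q}\hookrightarrow L^r_{\mathrm{loc}}$ needed to make the pairing $\langle u,\tilde{\varphi}_{j,k}\rangle$ and hence $\mathcal{P}_j$ well defined and uniformly bounded when $p<r$ is also the correct mechanism. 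What you have is a faithful sketch rather than a proof: the two estimates and the uniform stability of $\mathcal{P}_j$ (via biorthogonality, compact support, and local finite-dimensionality with the scaling factor $2^{jd(1/p-1/r)}$) are asserted, not established, and the bound you write as $\|\mathcal{Q}_ju\|_{L^p}\lesssim\min(2^{-sj},\|u\|_{L^p})$ should carry the Besov seminorm on the first term; but no step of the plan is wrong, and filling it in would reproduce the cited proof.
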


The condition in \eqref{eq:s_range} defines the valid range of the smoothness of the space in which the norm is assessed.
This range is characterized by the data dimensionality~$d$, smoothness $t$ and polynomial reproduction $n$ of the basis $\varphi$, and $L^r$ stability of the projector in \eqref{eq:projector}.
Since $B^s_{2,2}=H^s$, Theorem~\ref{th:Besov_norm} is also immediately valid for the usual Sobolev spaces $H^s(\Omega)$ which, together with the $L^2$ stability result, gives
\begin{align}\label{eq:Sobolev_norm}
    \|u\|_{H^{s}}^2 \sim \sum_{k\in\Delta_0}|\alpha_{k}|^2 + \sum_{j\geq0}\sum_{k\in\nabla_j}2^{2sj}|\beta_{j,k}|^2,
    \quad d\left(\frac{1}{2}-\frac{1}{r}\right)\leq s<\min(t,n+1).
\end{align}
Such norm equivalences in $H^s$ have been extensively studied in the design of additive multilevel preconditioners of elliptic operators \cite{BornemannNorm1993,Oswald1994,XuSubspace1992}.

For piecewise polynomial nodal basis, it can be shown that $\varphi\in H^t$ with $t<n+1/2$, where $n$ is the order of the basis.
This simplifies the upper bound in \eqref{eq:Sobolev_norm}.
To simplify the lower bound, we need $r=2$ to get rid of the dimension $d$ yielding
\begin{align*}
    \varphi\text{ is }n\text{-th order Lagrange polynomial }
    \quad\to\quad 
    0\leq s<n+1/2.
\end{align*}
It is also worth noting that Theorem~\ref{th:Besov_norm} allows using interpolation projectors in \eqref{eq:interpolator} that are $L^p$ unstable for all $p<\infty$.
However, the condition in \eqref{eq:Sobolev_norm} becomes more restrictive due to its dependence on the dimension~$d$.
For instance, the piecewise linear interpolation corresponds to $n=1$, $r=\infty$, and $\varphi\in H^{3/2}$ resulting in $s~\in~\left(\frac{d}{2},\frac{3}{2}\right)$ if $d\leq2$ and no valid values for $d>2$.
These lower bounds once again highlight the special role of $L^2$-stable projectors.

MGARD (MultiGrid Adaptive Reduction of Data) is an error controlled compression algorithm that builds upon the theory of stable decomposition of function spaces discussed above \cite{AinsworthMGARD1D2018,AinsworthMGARDnD2019,GongMGARDSoftware2023}.
The main ingredient of MGARD is given by $L^2$-\textit{orthogonal} projectors that we denote $\mathcal{P}_j^o$ to make a distinction from the more general oblique projectors~$\mathcal{P}_j$ in \eqref{eq:projector}.
The apparent drawback of this approach is the global support of the dual basis $\tilde{\varphi}$.
However, it is partially eliminated by efficient linear-complexity solvers for inverting banded Gramm matrices of compactly supported primal basis $\varphi$ in one-dimensional and tensor-product spaces.
Using the equivalence relation in~\eqref{eq:Sobolev_norm}, MGARD is able to control the compression error in the corresponding Sobolev norms, as well as in $L^\infty$ norm and arbitrary linear quantities of interest (QoI)~\cite{AinsworthMGARDQoI2019}.
Further extensions include machine learning assisted error control of nonlinear QoI~\cite{Lee2022}, adaptive compression with feature preservation~\cite{GongClimate2023}, and compression of data defined on certain unstructured grids~\cite{AinsworthMGARDFE2020}.

The core MGARD algorithm is designed assuming piecewise linear data representation.
As a result, the norm equivalence in \eqref{eq:Sobolev_norm} is valid only for $s\in[0,3/2)$ and the method cannot take advantage of higher regularity in data.
In this work, we extend this construction to arbitrary order Lagrange finite elements $Q_p$, $p\geq 0$, and propose a reformulation of the algorithm as a lifting scheme with variable order polynomial predictors.
Our primary theoretical contributions are as follows:
\begin{enumerate}
    \item we present a practical, constructive scheme for the wavelet decomposition of arbitrary-order piecewise polynomial spaces on the interval, 
    \item we introduce a new family of stable projection operators, which lead to compactly supported wavelet bases.
\end{enumerate}
The key practical contributions are as follows:
\begin{enumerate}[resume]
    \item the flexible selection of the wavelet order allows for better alignment with the intrinsic data regularity, leading to improved compression ratios,  
    \item the compact support of the wavelet bases facilitates a more efficient implementation.
\end{enumerate}
We anticipate that the proposed construction will serve as a foundation for further algorithmic extensions and more precise error control.

The paper is organized as follows.
In Section~\ref{sec:MGARD_transform}, we formulate MGARD transform as a lifting scheme, provide its matrix formulation, and describe refinement equations.
Section~\ref{sec:uniform_grids} is devoted to the specific construction for the uniform dyadic grids on the interval and tensor product grids.
Section~\ref{sec:results} contains numerical results demonstrating properties of the transform.
Section~\ref{sec:conclusion} concludes the discussion and provides several directions for future work.

\section{MGARD wavelet transform}
\label{sec:MGARD_transform}

Here we provide a rather general construction of the MGARD transform as a wavelet lifting scheme.
First, denote by $V(\Omega)$ the space of continuous functions defined on a compact set $\Omega\in\mathbb{R}^d$ and consider a sequence of nested grids
\begin{align*}
    \mathcal{G}_0\subset\hdots\subset\mathcal{G}_{j-1}\subset\mathcal{G}_j\subset\hdots\subset\mathcal{G}_L\subset\Omega,
    \qquad
    \mathcal{G}_j:=\{x_{k}:k\in\Delta_j\},
\end{align*}
for some nested nodal index sets $\Delta_{j-1}\subset\Delta_j$.
For each grid $\mathcal{G}_j$, consider two subdivisions into
\begin{itemize}
    \item a collection $\mathcal{T}_j$ of non-overlapping elements,
    \item a collection $\mathcal{S}_j$ of non-overlapping subdomains such that each $\sigma\in\mathcal{S}_j$ is a union of one or more connected elements in $\mathcal{T}_j$.
    A natural choice for $\mathcal{S}_j$ is $\mathcal{T}_k,\;k\leq j$. 
\end{itemize}
Figure~\ref{fig:subdivision} illustrates one possible subdivision of the grid in two domains.

\begin{figure}[tb]
    \centering
    \scalebox{0.8}{
    \tikzsetnextfilename{grid}
    \tikzset{
        dot/.style   = {circle, fill, draw=black, minimum size=3pt, inner sep=0pt, outer sep=0pt},
        fdot/.style  = {circle, fill, draw=gray,  minimum size=2pt, inner sep=0pt, outer sep=0pt},
    }
    \begin{tikzpicture}
        \node [dot] (c)    at (0,0) {};
        
        \node [dot] (p0)   at (2,0) {};
        \node [dot] (p60)  at (1,1.72) {};
        \node [dot] (p120) at (-1,1.72) {};
        \node [dot] (p180) at (-2,0) {};
        \node [dot] (p240) at (-1,-1.72) {};
        \node [dot] (p300) at (1,-1.72) {};

        \node [fdot] (fp0)   at (1,0) {};
        \node [fdot] (fp60)  at (0.5,0.86) {};
        \node [fdot] (fp120) at (-0.5,0.86) {};
        \node [fdot] (fp180) at (-1,0) {};
        \node [fdot] (fp240) at (-0.5,-0.86) {};
        \node [fdot] (fp300) at (0.5,-0.86) {};
        \node [fdot] (fp30)  at (1.5,0.86) {};
        \node [fdot] (fp90)  at (0,1.72) {};
        \node [fdot] (fp150) at (-1.5,0.86) {};
        \node [fdot] (fp210) at (-1.5,-0.86) {};
        \node [fdot] (fp270) at (0,-1.72) {};
        \node [fdot] (fp330) at (1.5,-0.86) {};

        \draw[black, very thick]  (c) -- (p0);
        \draw[black, very thick]  (c) -- (p60);
        \draw[black, very thick]  (c) -- (p120);
        \draw[black, very thick]  (c) -- (p180);
        \draw[black, very thick]  (c) -- (p240);
        \draw[black, very thick]  (c) -- (p300);
        
        \draw[black, very thick] (p0) -- (p60) -- (p120) -- (p180) -- (p240) -- (p300) -- (p0);

        \draw[gray]  (fp0)   -- (fp30)  -- (fp60)  -- (fp0);
        \draw[gray]  (fp60)  -- (fp90)  -- (fp120) -- (fp60);
        \draw[gray]  (fp120) -- (fp150) -- (fp180) -- (fp120);
        \draw[gray]  (fp180) -- (fp210) -- (fp240) -- (fp180);
        \draw[gray]  (fp240) -- (fp270) -- (fp300) -- (fp240);
        \draw[gray]  (fp300) -- (fp330) -- (fp0)   -- (fp300);
    \end{tikzpicture}    
    \hspace{2em}
    \tikzsetnextfilename{rect_grid}
    \begin{tikzpicture}
        \foreach \yy in {0,1.2,...,3.6} \foreach \xx in {0,2.0,...,8} \node [dot]  at (\xx,\yy) {};
        \foreach \yy in {0,0.3,...,3.6} \foreach \xx in {0,0.5,...,8} \node [fdot] at (\xx,\yy) {};

        \foreach \xx in {0,0.5,...,8}   \draw[gray] (\xx,0) -- (\xx,3.6);
        \foreach \yy in {0,0.3,...,3.6} \draw[gray] (0,\yy) -- (8,\yy);
        
        \foreach \xx in {0,2.0,...,8.0} \draw[black, very thick] (\xx,0) -- (\xx,3.6);
        \foreach \yy in {0,1.2,...,3.6} \draw[black, very thick] (0,\yy) -- (8,\yy);
    \end{tikzpicture} 
    }
    \caption{A subdivision of two domains into elements $\mathcal{T}$ (gray) and subdomains $\mathcal{S}$ (black).}
    \label{fig:subdivision}
\end{figure}
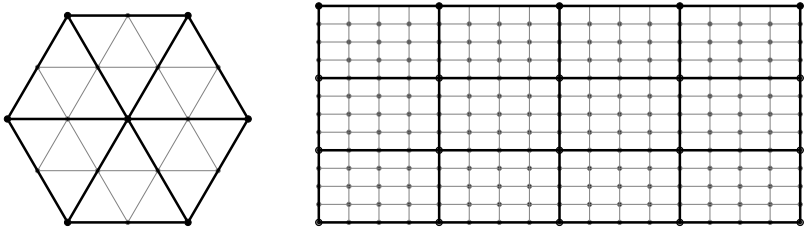

Denote by $\Delta^{\tau}_j:=\Delta_j\cap\tau$, i.e., those indices of $\Delta_j$ that are also in $\tau\in\mathcal{T}_j$.
Define $\Delta^{\sigma}_j:=\Delta_j\cap\sigma$ analogously.
To each $\sigma\in\mathcal{S}_j$, associate a discrete function space $V_j^{\sigma}$ with a nodal basis $\{\varphi^{\sigma}_{j,k}:k\in\Delta^{\sigma}_j\}$.
Given $V_j^{\sigma}$, define two function spaces on the whole grid $\mathcal{G}_j$: 
\begin{itemize}
    \item a `broken' space $V_{\mathcal{S}_j}:=\bigoplus_{\sigma\in\mathcal{S}_j} V^{\sigma}_j$ of functions that are continuous in each $\sigma\in\mathcal{S}_j$, but discontinuous across the subdomains,
    \item a space $V_j$ of continuous functions spanned by the global nodal basis $\{\varphi_{j,k}:k\in\Delta_j\}$, i.e., the basis of $V_{\mathcal{S}_j}$ except the basis vectors at the interface nodes are `glued' together.
\end{itemize}

\begin{remark}
    For the proposed construction, one needs to ensure the nestedness of the spaces $V_j\subset V_{j+1}$ and $V_{\mathcal{S}_j}\subset V_{\mathcal{S}_{j+1}}$.
    In one dimension, the natural choice is the space of piecewise polynomials on dyadically refined grids. 
    In this scenario, each polynomial on the coarse element is precisely represented by two polynomials of the same order on the elements of the finer grid. 
    This construction is also applicable to tensor product spaces. 
    A similar approach applies to certain simplicial meshes obtained through iterative refinement of the initial mesh via subdivision of simplex edges \cite{AinsworthMGARDFE2020}, see Figure~\ref{fig:subdivision}. 
    Consequently, the data values at the nodes of the coarsest simplex uniquely determine the linear function that can be exactly represented by simplexes at finer levels. 
    However, this method does not straightforwardly extend to general unstructured meshes. 
    One potential solution involves commencing with the final mesh and employing mesh coarsening to establish the mesh hierarchy. 
    The function spaces on the resulting polygonal meshes can then be defined by employing generalized barycentric coordinates, augmented with additional constraints to ensure hierarchical construction \cite{FLOATER2005623, HACKEMACK2018188}.
\end{remark}

Given a sequence of nested spaces $V_j$, consider three projection operators:
\begin{enumerate}
    \item the interpolation operator $\mathcal{I}_j:V\to V_j$
    \begin{align}\label{eq:interpolant}
        \mathcal{I}_ju = \sum_{k\in\Delta_j}u_{k}\varphi_{j,k},
    \end{align}
    \item the family of (oblique) projectors $\mathcal{P}_j:=\mathcal{A}_{\mathcal{S}_j}\mathcal{P}_{\mathcal{S}_j}:V\to V_j$ with the projectors $\mathcal{P}_{\mathcal{S}_j}:V\to V_{\mathcal{S}_j}$ defined in each subdomain as
    \begin{align}\label{eq:dg_projector}
        (\mathcal{P}_{\mathcal{S}_j} u,v)_{\sigma} &= (u,v)_{\sigma}, \quad\forall u\in V, v\in V^{\sigma}_j,
    \end{align}
    and the operator $\mathcal{A}_{\mathcal{S}_j}:V_{\mathcal{S}_j} \to V_j$ used to enforce the nodal continuity at the interface nodes
    \begin{align}\label{eq:smoother}
        \mathcal{A}_{\mathcal{S}_j}u &= \sum_{k\in\Delta_j}\{\!\{u_{k}\}\!\}\varphi_{j,k},
    \end{align}
    where $\{\!\{u_{k}\}\!\}$ is the average value, 
    \item the detail projection operator $\mathcal{Q}_j:=\mathcal{P}_{j+1}-\mathcal{P}_j:V\to W_j$ inducing the orthogonal decomposition of each $V_j$ into its approximation and detail subspaces $V_j=V_{j-1}\oplus W_{j-1}$.
\end{enumerate}
Note that when $S_j$ consists of the single subdomain, $\mathcal{P}_j$ is the $L^2$-orthogonal projector onto $V_j$.
We will denote such a projector as $\mathcal{P}_j^{cg}$ to emphasize its relation to the continuous Galerkin finite element approximation.
Similarly, we will use $\mathcal{P}_j^{dg}$ to denote the projector in \eqref{eq:dg_projector}-\eqref{eq:smoother} when $\mathcal{S}_j=\mathcal{T}_j$.
Figure~\ref{fig:projections} shows different projections into the space of piecewise quadratic functions defined on the grid with eight elements.

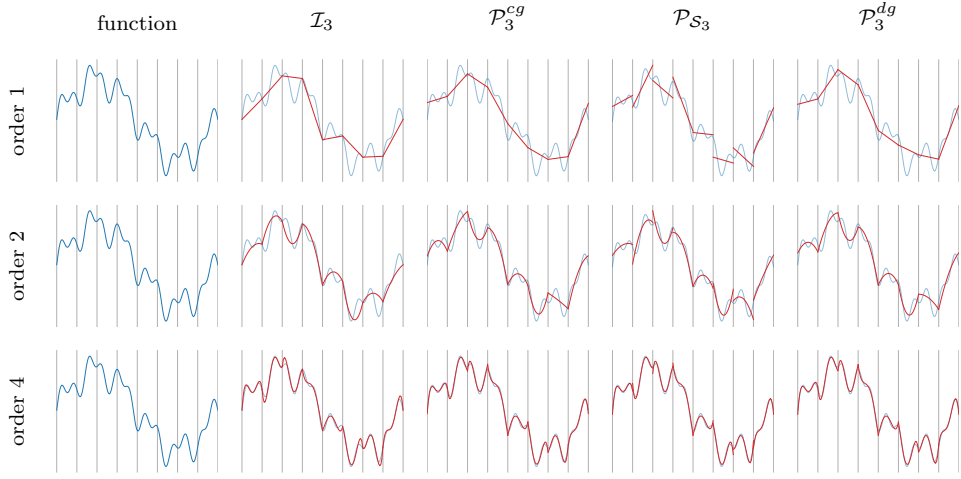
\begin{figure}[t]
    \centering
    \pgfplotsset{every tick label/.append style={font=\scriptsize}}
    \pgfplotsset{every axis label/.append style={font=\footnotesize}}
    \pgfplotsset{every axis title/.append style={font=\footnotesize}} 	
    
    \foreach \order in {1,2,4}
    {
        \begin{tikzpicture}[trim axis left,trim axis right]
            \begin{axis} [
                width=0.3\textwidth,
                ymode = normal,
                xticklabels={,,},
                yticklabels={,,},
                enlargelimits=false,
                axis on top,
                axis line style={draw=none},
                title=\ifthenelse{\equal{\order}{1}}{function}{},
                ylabel={order $\order$},
                tick style={draw=none},
                very thick ]
                \addplot graphics [points={(0,0) (1,1)}, includegraphics={trim=0 0 0 0,clip}] {{{images/fun}.pdf}};
            \end{axis}
        \end{tikzpicture}
        \foreach \proj/\tit in {interp/$\mathcal{I}_3$,CG/$\mathcal{P}_3^{cg}$,DG/$\mathcal{P}_{\mathcal{S}_3}$,DGCG/$\mathcal{P}_3^{dg}$}
        {
            \begin{tikzpicture}[trim axis left,trim axis right]
                \begin{axis} [
                    width=0.3\textwidth,
                    ymode = normal,
                    xticklabels={,,},
                    yticklabels={,,},
                    enlargelimits=false,
                    axis on top,
                    axis line style={draw=none},
                    title=\ifthenelse{\equal{\order}{1}}{\tit}{},
                    tick style={draw=none},
                    very thick ]
                    \addplot graphics [points={(0,0) (1,1)}, includegraphics={trim=0 0 0 0,clip}] {{{images/\proj_proj_\order}.pdf}};
                \end{axis}
            \end{tikzpicture}
        }
        \\[-1.5em]
    }
    \vspace{1.5em}
    \caption{Projections into the space of piecewise polynomials on the grid with $2^3$ elements.}
    \label{fig:projections}
\end{figure}

The crux of the MGARD wavelet transform is based on the following factorizations of $\mathcal{P}_j$ and~$\mathcal{Q}_j$
\begin{align}
    \label{eq:P_factorization}
    \mathcal{P}_{j} 
    &= \mathcal{I}_{j} \mathcal{P}_{j+1} + (\mathcal{P}_{j}-\mathcal{I}_{j}\mathcal{P}_{j+1})
    = \Big( \mathcal{I}_{j} + \mathcal{P}_{j}(I-\mathcal{I}_{j}) \Big) \mathcal{P}_{j+1},
    \\[1em]\label{eq:Q_factorization}
    \mathcal{Q}_{j}
    &:= \mathcal{P}_{j+1}-\mathcal{P}_{j}
    =\Big( I - \mathcal{I}_{j} - \mathcal{P}_{j}(I-\mathcal{I}_{j}) \Big) \mathcal{P}_{j+1}
    = \big(I-\mathcal{P}_{j}\big)\big(I-\mathcal{I}_{j}\big)\mathcal{P}_{j+1},
\end{align}
where we used $\mathcal{P}_{j}\mathcal{I}_{j}=\mathcal{I}_{j}$ which trivially holds for the projectors in \eqref{eq:interpolant} and \eqref{eq:dg_projector}-\eqref{eq:smoother}.

For any $v\in V_{j+1}$, we have $(v-\mathcal{I}_{j}v)\in V_{j+1}$ with
$$
v-\mathcal{I}_{j}v = 
\sum_{k\in\nabla_{j}}\big[v-\mathcal{I}_{j}v\big]_{k}\varphi_{j+1,k},
$$
and hence
\begin{align*}
    \big(I-\mathcal{P}_{j}\big)\big(I-\mathcal{I}_{j}\big)\sum_{k\in\Delta_{j+1}}v_{k}\varphi_{j+1,k}
    = \sum_{k\in\nabla_{j}}\big[v-\mathcal{I}_{j}v\big]_{k}\big(I-\mathcal{P}_{j}\big)\varphi_{j+1,k},
\end{align*}
where $\nabla_j:=\Delta_{j+1}\setminus\Delta_j$ is the surplus index set of those indices in $\Delta_{j+1}$ that are not in $\Delta_j$. 
It is convenient to write the above representation of $\mathcal{Q}_jv\in W_j$ for any $v\in V_{j+1}$ as an expansion
\begin{align}\label{eq:Q_application}
    \mathcal{Q}_{j}v := 
    \big(I-\mathcal{P}_{j}\big)\big(I-\mathcal{I}_{j}\big)v
    = \sum_{k\in\nabla_{j}}\beta_{j,k}\psi_{j,k}
\end{align}
with
\begin{align}\label{eq:wavelet_basis}
    \beta_{j,k} &:=\big[v-\mathcal{I}_{j}v\big]_{k}, 
    \qquad
    \psi_{j,k}:=\big(I-\mathcal{P}_{j}\big)\varphi_{j+1,k},
    \qquad
    k\in\nabla_j.
\end{align}
Hence, $\psi_{j,k}$ is the basis of the detail subpace $W_j$, and the wavelet coefficients $\beta_{j,k}$ quantify the mismatch between $v\in V_{j+1}$ and its interpolant $\mathcal{I}_{j}v\in V_j$ at the surplus nodes of $\nabla_j$.
By this definition, the coefficients $\beta_{j,k}$ are identically zero for all $v\in V_j$. 
Analogously, $\mathcal{P}_j\psi_{j,k}=0$ by construction reflecting the orthogonality of spaces $V_j\perp W_j$.

Similarly to \eqref{eq:Q_application}, one gets
\begin{align}\label{eq:P_application}
    \mathcal{P}_{j}v &:= 
    \Big( \mathcal{I}_{j} + \mathcal{P}_{j}(I-\mathcal{I}_{j}) \Big) \sum_{k\in\Delta_{j+1}}v_{k}\varphi_{j+1,k}
    = \sum_{k\in\Delta_{j}}v_k\varphi_{j,k} + \sum_{n\in\nabla_{j}}\beta_{j,n}\mathcal{P}_{j}\varphi_{j+1,n}
    \\\nonumber
    &=  \sum_{k\in\Delta_{j}}v_k\varphi_{j,k} + \sum_{n\in\nabla_{j}}\beta_{j,n}\sum_{k\in\Delta_j}u_{j,k,n}\varphi_{j,k}
    = \sum_{k\in\Delta_{j}} \left( v_k + \sum_{n\in\nabla_{j}}u_{j,k,n}\beta_{j,n} \right) \varphi_{j,k},
\end{align}
where $\{u_{j,k,n}:k\in\nabla_j\}$ are the coefficients in the nodal basis expansion of 
\begin{align}\label{eq:P_in_basis}
    \mathcal{P}_{j}\varphi_{j+1,n}
    = \sum_{k\in\Delta_j}u_{j,k,n}\varphi_{j,k},
    \quad n\in\nabla_j.
\end{align}

We now have all the components to formulate the MGARD transform as a lifting scheme using the \textit{Split}, \textit{Predict} and \textit{Update} operations detailed below.

\textbf{Split operation.}
Denote by $\boldsymbol{\alpha}_{j+1}:=\{[\mathcal{P}_{j+1}u]_{k}:k\in\Delta_{j+1}\}$ the vector of the nodal values of $\mathcal{P}_{j+1}v\in V_{j+1}$ at $\Delta_{j+1}$.
Given $\boldsymbol{\alpha}_{j+1}$, the \textit{Split} operation produces two vectors with the values of $\mathcal{P}_{j+1}v$ at the nodes of $\Delta_j\subset\Delta_{j+1}$ and $\nabla_j\subset\Delta_{j+1}$, namely $\boldsymbol{\alpha}_{j+1}^{\Delta}:=\{[\mathcal{P}_{j+1}v]_{k}:k\in\Delta_{j}\}$ and $\boldsymbol{\alpha}_{j+1}^\nabla:=\{[\mathcal{P}_{j+1}v]_{k}:k\in\nabla_{j}\}$.

\textbf{Predict operation.}
Using the nodal values of $\mathcal{P}_{j+1}v$ at $\Delta_j$, the \textit{Predict} operation estimates the values of $\mathcal{P}_{j+1}v$ at the nodes of $\nabla_j$ using the interpolant in \eqref{eq:interpolant}.
The detail coefficients are then calculated as the mismatch of this prediction
\begin{align*}
    \boldsymbol{\beta}_j = \boldsymbol{\alpha}_{j+1}^\nabla - \predictor_j\boldsymbol{\alpha}_{j+1}^\Delta,
\end{align*}
where $\boldsymbol{\beta}_{j}:=\{\beta_{j,k}:k\in\nabla_{j}\}$, and $\predictor_j\in\mathbb{R}^{|\nabla_j^\sigma|\times|\Delta_j^\sigma|}$ is the matrix representation of the interpolating predictor.

\textbf{Update operation.}
Given the nodal values $\boldsymbol{\alpha}_{j+1}^\Delta$ of $\mathcal{P}_{j+1}v$ at $\Delta_j$ and the detail coefficients $\boldsymbol{\beta}_{j}$ at $\nabla_j$, the \textit{Update} operation calculates the nodal values $\boldsymbol{\alpha}_j$ of $\mathcal{P}_{j}v$ at $\Delta_j$ using \eqref{eq:P_application}.
Specifically, the coefficients $\matop{U}^{\sigma}_{j}:=\{u^{\sigma}_{j,k,n}:k\in\Delta^{\sigma}_j,n\in\nabla^{\sigma}_j\}$ 
for the projections of the basis functions $\{\varphi^{\sigma}_{j+1,n}:n\in\nabla^{\sigma}_j\}$ in each subdomain $\sigma\in\mathcal{S}_j$ are given by
\begin{align}\label{eq:update_mat}
    \gramcc^{\sigma} \matop{U}^{\sigma}_{j} = \gramcf^{\sigma}
    \quad\to\quad
    \matop{U}^{\sigma}_{j} = \left(\gramcc^{\sigma}\right)^{-1}\gramcf^{\sigma},
\end{align}
where $\gramcc^{\sigma}$ and $\gramcf^{\sigma}$ are the Gram matrices with the entries
\begin{alignat*}{2}
    &[\gramcc^{\sigma}]_{n',m'} := \langle\varphi^{\sigma}_{j,n},\varphi^{\sigma}_{j,m}\rangle,
    \quad &&n\in\Delta^{\sigma}_j, m\in\Delta^{\sigma}_j,
    \\
    &[\gramcf^{\sigma}]_{n',m'} := \langle\varphi^{\sigma}_{j,n},\varphi^{\sigma}_{j+1,m}\rangle,
    \quad &&n\in\Delta^{\sigma}_j, m\in\nabla^{\sigma}_j,
\end{alignat*}
and $n',m'$ are the local (to the subdomain $\sigma$) indices of the global indices $n,m$.
The columns of the update matrix $\matop{U}^{\sigma}_{j}\in\mathbb{R}^{|\Delta_j^\sigma|\times|\nabla_j^\sigma|}$ contain the nodal values of the projected basis functions in each subdomain.
The global update matrix $\matop{U}_j\in\mathbb{R}^{|\Delta_j|\times|\nabla_j|}$ is then obtained by applying the averaging operator in \eqref{eq:smoother} to the columns of $\{\matop{U}^{\sigma}_{j}, \sigma\in \mathcal{S}_j\}$.

Consequently, the nodal values $\boldsymbol{\alpha}_j$ of $\mathcal{P}_{j}u$ at $\Delta_j$ are calculated as
\begin{align*}
    \boldsymbol{\alpha}_j = \boldsymbol{\alpha}_{j+1}^\Delta + \matop{U}_j \boldsymbol{\beta}_j.
\end{align*}

The steps in the described transform are fully invertible, as indicated in Figure~\ref{fig:MGARD_lifting}, with the obvious definition of the \textit{Merge} operation.
To summarize, the steps for the forward and inverse transforms are given in Table~\ref{tab:MGARD_steps}.

\begin{figure}[tb]
    \centering
    \def\w{\textwidth}
    \stackinset{c}{0pt}{t}{-12pt}{}{\includegraphics[width=\w]{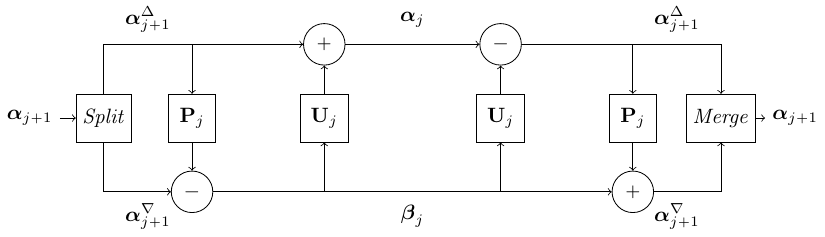}}
    \caption{Schematic illustration of MGARD lifting steps.}
    \label{fig:MGARD_lifting}
\end{figure}

\begin{table}[h]
    \centering
    \vspace{1em}
    \begin{tabularx}{\linewidth}{>{\def\fullwidthdisplay{\displayindent0pt \displaywidth\hsize}}Xp{0.05\linewidth}|p{0.05\linewidth}>{\def\fullwidthdisplay{\displayindent0pt \displaywidth\hsize}}X}
        \makecell{\textbf{Forward transform}, \\$\mgard_{j+1\to j}: \mathvec{\alpha}_{j+1}\to(\mathvec{\alpha}_j,\mathvec{\beta}_j)$} &&& \makecell{\textbf{Inverse transform}, \\$\mgard_{j\to j+1}^{-1}: (\mathvec{\alpha}_j,\mathvec{\beta}_j)\to\mathvec{\alpha}_{j+1}$}\\[2em]
        \hline
        {\begin{align}
            \mathvec{\alpha}_{j+1}^\Delta, \mathvec{\alpha}_{j+1}^\nabla &= \text{\textit{Split}}(\mathvec{\alpha}_{j+1})
            \\[1em]
            \mathvec{\beta}_j &= \mathvec{\alpha}_{j+1}^\nabla - \predictor_j\mathvec{\alpha}_{j+1}^\Delta
            \\[1em]
            \mathvec{\alpha}_j &= \mathvec{\alpha}_{j+1}^\Delta + \matop{U}_j \mathvec{\beta}_j
        \end{align}}
        & & &
        {\begin{align}
            \mathvec{\alpha}_{j+1}^\Delta &= \mathvec{\alpha}_j - \matop{U}_j \mathvec{\beta}_j
            \\[1em]
            \boldsymbol{\alpha}_{j+1}^\nabla &= \mathvec{\beta}_j + \predictor_j\mathvec{\alpha}_{j+1}^\Delta
            \\[1em]
            \mathvec{\alpha}_{j+1} &= \text{\textit{Merge}}(\mathvec{\alpha}_{j+1}^\Delta,\mathvec{\alpha}_{j+1}^\nabla)
        \end{align}}
    \end{tabularx}
    \caption{MGARD transform steps.}
    \label{tab:MGARD_steps}
\end{table}

\begin{remark}
It is important to highlight that the factorizations in \eqref{eq:P_factorization}-\eqref{eq:Q_factorization} remain valid for any pair of projectors satisfying $\mathcal{P}_{j}\mathcal{I}_{j}=\mathcal{I}_{j}$.
For example, if we substitute $\mathcal{P}_j$ with $\mathcal{I}_j$, we recover the hierarchical basis construction of Yserentant in \cite{BornemannNorm1993}. 
However, such replacement lacks stability, as indicated in Section~\ref{sec:intro}. 
On a positive note, this substitution leads to a straightforward implementation since the \textit{Update} operation is not required.
The advantage of introducing the interpolation operator $\mathcal{I}_j$ into the factorizations of $\mathcal{P}_j$ and $\mathcal{Q}_j$ becomes apparent from the expansions in \eqref{eq:Q_application}, \eqref{eq:P_application} which require the same number of degrees of freedom in total as the original projector $\mathcal{P}_{j+1}$.
Additionally, the nodal basis used in \eqref{eq:interpolant} enables a straightforward interpretation of data as the coefficients of such basis expansions.
\end{remark}

\subsection{Matrix formulation}

The following definitions are helpful to formulate the matrix representation of the transform.
\begin{definition}\label{def:binaryop}
    Define two binary operations:
    \begin{enumerate}
        \item $\blackdiamond:\mathbb{R}^{\smallbullet\times\Delta_j}\times\mathbb{R}^{\smallbullet\times\nabla_j}\to\mathbb{R}^{\smallbullet\times\Delta_{j+1}}$ creates a new matrix by merging the \textbf{columns} of the inputs according to the index sets $\Delta_j,\nabla_j\subset\Delta_{j+1}$,
        \item $\diamond:\mathbb{R}^{\Delta_j\times\smallbullet}\times\mathbb{R}^{\nabla_j\times\smallbullet}\to\mathbb{R}^{\Delta_{j+1}\times\smallbullet}$ creates a new matrix by merging the \textbf{rows} of the inputs according to the index sets $\Delta_j,\nabla_j\subset\Delta_{j+1}$.
    \end{enumerate}
    The following properties are trivial
    \begin{multicols}{2}
    \begin{enumerate}
        \item $\matop{c}(\matop{a}\blackdiamond\matop{b}) = \matop{c}\matop{a}\blackdiamond\matop{c}\matop{b}$,
        \item $(\matop{a}\diamond\matop{b})\matop{c} = \matop{a}\matop{c}\diamond\matop{b}\matop{c}$,
        \item $(\matop{a}\blackdiamond\matop{b})(\matop{c}\diamond\matop{d}) = \matop{a}\matop{c} + \matop{b}\matop{d}$,
        \item $[\matop{a},\matop{b}]^T\blackdiamond[\matop{c},\matop{d}]^T = [\matop{a}\blackdiamond\matop{c},\matop{b}\blackdiamond\matop{d}]^T$,
        \item $[\matop{a},\matop{b}]\diamond[\matop{c},\matop{d}] = [\matop{a}\diamond\matop{c},\matop{b}\diamond\matop{d}]$,
        \item $(\matop{a}\blackdiamond\matop{b})^T=\matop{a}^T\diamond\matop{b}^T$,
        \item $\matop{a}\diamond(\matop{b}+\matop{c}) = \matop{a}\diamond\matop{b} + \matop{a}\diamond\matop{c}$,
        \item $(\matop{a}+\matop{b})\blackdiamond\matop{c} = \matop{a}\blackdiamond\matop{c}+\matop{b}\blackdiamond\matop{c}$.
    \end{enumerate}
    \end{multicols}
\end{definition}

\begin{definition}\label{def:stackelemets}
    For any two matrices $\matop{a}\in\mathbb{R}^{m\times n}$, $\matop{b}\in\mathbb{R}^{p\times q}$, defi\textbf{}ne the following concatenation operators
    \begin{alignat*}{2}
        \matop{a}\overt\matop{b}
        &:=
        \begingroup 
        \setlength\arraycolsep{2pt}
        \begin{bmatrix}
            a_{00}   & \hdots & a_{0n}  &        &        \\
            \vdots   & \ddots & \vdots  &        &        \\
            a_{m0}   & \hdots & a_{mn}  &        &        \\
                     &        & b_{00}  & \hdots & b_{0q} \\
                     &        & \vdots  & \ddots & \vdots \\
                     &        & b_{p0}  & \hdots & b_{pq} \\
        \end{bmatrix},
        \endgroup
        \qquad&&
        \matop{a}\ominus\matop{b}
        :=
        \begingroup 
        \setlength\arraycolsep{2pt}
        \begin{bmatrix}
            a_{00}   & \hdots & a_{0n}  &         &        & \\
            \vdots   & \ddots & \vdots  &         &        & \\
            a_{m0}   & \hdots & a_{mn}  & b_{00}  & \hdots & b_{0q} \\
                     &        &         & \vdots  & \ddots & \vdots \\
                     &        &         & b_{p0}  & \hdots & b_{pq} \\
        \end{bmatrix},
        \endgroup
        \\[1em]
        \matop{a}\oplus\matop{b}
        &:=
        \begingroup 
        \setlength\arraycolsep{2pt}
        \begin{bmatrix}
            a_{00}   & \hdots & a_{0n}  &        &        \\
            \vdots   & \ddots & \vdots  &        &        \\
            a_{m0}   & \hdots & a_{mn}+b_{00}  & \hdots & b_{0q} \\
                     &        & \vdots  & \ddots & \vdots \\
                     &        & b_{p0}  & \hdots & b_{pq} \\
        \end{bmatrix},
        \endgroup
        \qquad&&
        \matop{a}\obslash\matop{b}
        :=
        \begingroup 
        \setlength\arraycolsep{2pt}
        \begin{bmatrix}
            a_{00}   & \hdots & a_{0n}  &         &        &        \\
            \vdots   & \ddots & \vdots  &         &        &        \\
            a_{m0}   & \hdots & a_{mn}  &         &        &        \\
                     &        &         & b_{00}  & \hdots & b_{0q} \\
                     &        &         & \vdots  & \ddots & \vdots \\
                     &        &         & b_{p0}  & \hdots & b_{pq} \\
        \end{bmatrix},
        \endgroup
    \end{alignat*}
    so that $\matop{a}\overt\matop{b}\in\mathbb{R}^{m+p\times n+q-1}$, $\matop{a}\ominus\matop{b}\in\mathbb{R}^{m+p-1\times n+q}$, $\matop{a}\oplus\matop{b}\in\mathbb{R}^{m+p-1\times n+q-1}$, and $\matop{a}\obslash\matop{b}\in\mathbb{R}^{m+p\times n+q}$.
\end{definition}

Using Definition~\ref{def:binaryop}, the matrix representations of the transform are given by
\begin{align}\label{eq:levelmatmgard}
    \begin{bmatrix}
        \mathvec{\alpha}_j\\[0.5em]
        \mathvec{\beta}_j
    \end{bmatrix}
    &=
    \mathsf{M}_j
    \cdot
    \mathvec{\alpha}_{j+1},
    \qquad
    \mathvec{\alpha}_{j+1} 
    =
    \mathsf{M}_j^{-1}
    \cdot
    \begin{bmatrix}
        \mathvec{\alpha}_j\\[0.5em]
        \mathvec{\beta}_j
    \end{bmatrix},
\end{align}
where 
\begin{align*}
    \mathsf{M}_j &:=
    \begin{bmatrix}
        \mathsf{A}_j\\[0.5em]
        \mathsf{B}_j
    \end{bmatrix},
    \qquad
    \mathsf{M}_j^{-1} := \begin{bmatrix} \mathsf{C}_j & \mathsf{D}_j \end{bmatrix}
\end{align*}
with
\begin{alignat*}{2}
    \mathsf{A}_j &:= \big( \eye{\Delta_j} - \matop{U}_j\predictor_j\big) \blackdiamond \matop{U}_j,
    \quad&&
    \mathsf{B}_j := (-\predictor_j)\blackdiamond\eye{\nabla_{j}},
    \\[0.5em]
    \mathsf{C}_j &:= \eye{\Delta_j}\diamond\predictor_j,
    \qquad&&
    \mathsf{D}_j := (-\matop{U}_j)\diamond(\eye{\nabla_j}-\predictor_j \matop{U}_j),
\end{alignat*}
and $\eye{\Delta_j},\eye{\nabla_j}$ are the identity matrices of the appropriate size.

For the composite transform, we have
\begin{align}\label{eq:matmgard}
    \begin{bmatrix}
        \mathvec{\alpha}_0\\
        \mathvec{\beta}_0\\
        \vdots\\
        \mathvec{\beta}_{J-1}
    \end{bmatrix}
    =
    \mathsf{M}
    \cdot
    \mathvec{\alpha}_{J}
    =
    \prod_{j=0}^{J-1}
    \begin{bmatrix}
        \mathsf{M}_j
        &  \\
        & \eye{|\Delta_J|-|\Delta_{j}|}
    \end{bmatrix}
    \cdot
    \mathvec{\alpha}_{J},
\end{align}
and similarly for the inverse transform
\begin{align}\label{eq:matinvmgard}
    \mathvec{\alpha}_{J} 
    &=
    \mathsf{M}^{-1}
    \cdot
    \mathvec{\alpha}_{J}
    =
    \prod_{j=1}^{J}
    \begin{bmatrix}
        \mathsf{M}_{J-j}^{-1}
        &  \\
        & \eye{|\Delta_J|-|\Delta_{j}|}
    \end{bmatrix}
    \begin{bmatrix}
        \mathvec{\alpha}_0\\
        \mathvec{\beta}_0\\
        \vdots\\
        \mathvec{\beta}_{J-1}
    \end{bmatrix},
\end{align}
where $\mathsf{M}$ is the transform matrix.

\subsection{Refinement equations}

\subsubsection{Primal basis}

Given the inverse transform $\mgard_{j\to j+1}^{-1}$, one immediately gets the refinement equations for the nodal and wavelet bases.
Specifically, the transform $\mgard_{j\to j+1}^{-1}(\mathvec{\delta}^{\scriptscriptstyle\Delta_j}_k,\mathvec{0})=\text{\textit{Merge}}(\mathvec{\delta}^{\scriptscriptstyle\Delta_j}_k,\predictor_j\mathvec{\delta}^{\scriptscriptstyle\Delta_j}_k)$ gives the refinement equation for the nodal basis
\begin{align}\label{eq:basis_refined_phi}
    \varphi_{j,k} &= \varphi_{j+1,k} + \sum_{p\in\nabla_{j}} [\predictor_j]_{p,k} \varphi_{j+1,p},
    \qquad
    k\in\Delta_j,
\end{align}
where $\mathvec{\delta}^{\scriptscriptstyle\Delta_j}_k$ is the standard basis vector on $\Delta_j$ with the unit element in the $k$-th position. 
Similarly, the refinement equation for the wavelet basis is given by $\mgard_{j+1\to j}^{-1}(\mathvec{0},\mathvec{\delta}^{\scriptscriptstyle\nabla_j}_k)=\text{\textit{Merge}}(-\matop{U}_j \mathvec{\delta}^{\scriptscriptstyle\nabla_j}_k,(\eye{\nabla_j}-\predictor_j\matop{U}_j) \mathvec{\delta}^{\scriptscriptstyle\nabla_j}_k)$
\begin{align}\label{eq:basis_refined_psi}
    \psi_{j,k} &= 
    \varphi_{j+1,k}
    -\sum_{p\in\Delta_j}[\matop{U}_j]_{p,k} \varphi_{j+1,p}
    -\sum_{p\in\nabla_j}[\predictor_j\matop{U}_j]_{p,k} \varphi_{j+1,p}
    \qquad
    k\in\nabla_j.
\end{align}
One can use \eqref{eq:basis_refined_phi} to simplify the above expression as
\begin{align*}
    \psi_{j,k} 
    &= \varphi_{j+1,k}
    -\sum_{p\in\Delta_j}[\matop{U}_j]_{p,k} \varphi_{j+1,p}
    -\sum_{n\in\Delta_j}[\matop{U}_j]_{n,k}(\varphi_{j,n} - \varphi_{j+1,n})
    \\
    &= \varphi_{j+1,k}
    -\sum_{p\in\Delta_j}[\matop{U}_j]_{p,k}\varphi_{j,p}.
\end{align*}

In the vector-matrix form, the refinement equations in \eqref{eq:basis_refined_phi}-\eqref{eq:basis_refined_psi} are written as
\begin{align}
    \label{eq:refinement_phi}
    \mathvec{\varphi}_{j} 
    &= \mathvec{\varphi}_{j+1} \cdot \mathsf{C}_j,
    \\[1em]
    \label{eq:refinement_psi}
    \mathvec{\psi}_{j} 
    &=\mathvec{\varphi}_{j+1} \cdot \mathsf{D}_j,
\end{align}
where $\mathvec{\varphi}_{j}:=\{\varphi_{j,k}:k\in\Delta_j\}$, $\mathvec{\psi}_{j}:=\{\psi_{j,k}:k\in\nabla_j\}$ are the row vectors of basis functions. 
The refinement equation \eqref{eq:refinement_phi} is somewhat redundant as the nodal basis at each level is fixed a priory.

Recurrent application of the above formulas gives the so-called cascade algorithm that implicitly defines the basis functions at any point in the domain.
Specifically, equations \eqref{eq:refinement_phi}-\eqref{eq:refinement_psi} involve the same matrices as the matrix form of the inverse transform. 
The nodal representation of the basis vectors at any level $J$ is then given by the columns of the matrices in \eqref{eq:matinvmgard}~as
\begin{align}
    \label{eq:refined_phi}
    \mathvec{\varphi}_{j} 
    &=\mathvec{\varphi}_{J} \cdot \left( \prod_{l=j+1}^{J-1} \mathsf{C}_{J+j-l} \right)
    \cdot \mathsf{C}_j,
    \\
    \label{eq:refined_psi}
    \mathvec{\psi}_{j} 
    &=\mathvec{\varphi}_{J} \cdot \left( \prod_{l=j+1}^{J-1} \mathsf{C}_{J+j-l} \right)
    \cdot \mathsf{D}_j.
\end{align}

\subsubsection{Dual basis}
In order to determine the values of the dual basis functions $\tilde{\mathvec{\varphi}}$, $\tilde{\mathvec{\psi}}$, one can use the definition of the wavelet coefficients in the expansion
\begin{align*}
    P_ju = \sum_{k\in\Delta_j} \alpha_{j,k} \varphi_{j,k} + \sum_{l\geq j}\sum_{k\in\nabla_l} \beta_{l,k}\psi_{l,k}
    = \sum_{k\in\Delta_j} \langle u,\tilde{\varphi}_{j,k}\rangle \varphi_{j,k} + \sum_{l\geq j}\sum_{k\in\nabla_l} \langle u,\tilde{\psi}_{l,k}\rangle\psi_{l,k}.
\end{align*}
When $u:=\delta(\cdot-y)$ is the Dirac measure centered at $y$, we get $\tilde{\psi}_{j,k}(y):=\langle\delta(\cdot-y),\tilde{\psi}_{j,k}\rangle$.
Hence, the coefficients of the above expansion provide the values of the dual basis functions at a given point.
These values are given by the forward MGARD transform $(\tilde{\mathvec{\varphi}}_j,\tilde{\mathvec{\psi}}_j)_p=\mgard_{J\to j}(\varepsilon_{J,p}^{-1}\mathvec{\delta}_p^{\scriptscriptstyle\Delta_J})$.
The scaling $\varepsilon_{J,p}^{-1}$ of the standard basis vector $\mathvec{\delta}_p^{\scriptscriptstyle\Delta_J}$ ensures the unit integral of the approximate delta function at the level $J$ and is required for the weak convergence to the Dirac distribution when $J\to\infty$.
The nodal representation of the dual basis vectors $\tilde{\mathvec{\varphi}},\tilde{\mathvec{\psi}}$ at the nodes of $\Delta_J$ is thus given by 
\begin{align}
    \label{eq:refined_dualphi}
    \tilde{\mathvec{\varphi}}_{j} 
    &= \mathsf{A}_j \cdot
    \Bigg( \prod_{l=j+1}^{J-1} \mathsf{A}_l \Bigg) \cdot \mathvec{\varepsilon}_{J}^{-1} \cdot \mathvec{\varphi}_{J}^T,
    \\
    \label{eq:refined_dualpsi}
    \tilde{\mathvec{\psi}}_{j} 
    &= \mathsf{B}_j \cdot
    \Bigg( \prod_{l=j+1}^{J-1} \mathsf{A}_l \Bigg) \cdot \mathvec{\varepsilon}_{J}^{-1} \cdot \mathvec{\varphi}_{J}^T,
\end{align}
where $\tilde{\mathvec{\varphi}}_{j}:=\{\tilde{\varphi}_{j,k}:k\in\Delta_j\}$, $\tilde{\mathvec{\psi}}_{j}:=\{\tilde{\psi}_{j,k}:k\in\nabla_j\}$ are the column vectors of dual basis functions, and $\mathvec{\varepsilon}_{J}^{-1} := \diag\big(\{\varepsilon_{J,k}^{-1}:k\in\Delta_J\}\big)$ is the diagonal scaling matrix.

By comparing the expressions for $\tilde{\mathvec{\varphi}}_{j}$, $\tilde{\mathvec{\varphi}}_{j-1}$, one gets the refinement equations for the dual basis as well
\begin{align}
    \label{eq:refinement_dualphi}
    \tilde{\mathvec{\varphi}}_{j} 
    &= \mathsf{A}_j \cdot \tilde{\mathvec{\varphi}}_{j+1},
    \\[0.5em]
    \label{eq:refinement_dualpsi}
    \tilde{\mathvec{\psi}}_{j} 
    &= \mathsf{B}_j \cdot \tilde{\mathvec{\varphi}}_{j+1},
\end{align}
with
\begin{align}
    \tilde{\mathvec{\varphi}}_{J-1} 
    &= \mathsf{A}_{J-1} \cdot \mathvec{\varepsilon}_{J}^{-1} \cdot \mathvec{\varphi}_{J}^T,
    \\[0.5em]
    \tilde{\mathvec{\psi}}_{J-1} 
    &= \mathsf{B}_{J-1} \cdot \mathvec{\varepsilon}_{J}^{-1} \cdot \mathvec{\varphi}_{J}^T.
\end{align}

Similarly to \eqref{eq:basis_refined_phi}-\eqref{eq:basis_refined_psi}, we can write the refinement equations for each basis vector~as
\begin{align}\label{eq:basis_refined_dualphi}
    \tilde{\psi}_{j,k}
    &= \tilde{\varphi}_{j+1,k} - \sum_{p\in\Delta_j} [\predictor_{j}]_{k,p} \tilde{\varphi}_{j+1,p},
    \quad k\in\nabla_j,
    \\
    \label{eq:basis_refined_dualpsi}
    \tilde{\varphi}_{j,k}
    &= \tilde{\varphi}_{j+1,k} + \sum_{p\in\nabla_j} [\matop{U}_{j}]_{k,p} \tilde{\psi}_{j,p},\phantom{_{+1}}
    \quad k\in\Delta_j.
\end{align}

\section{Uniform dyadic grids on $\Omega:=[0,1]$}
\label{sec:uniform_grids}

\begin{figure}[tb]
    \centering
    \def\w{\textwidth}
    \stackinset{c}{0pt}{t}{-12pt}{}{\includegraphics[width=\w]{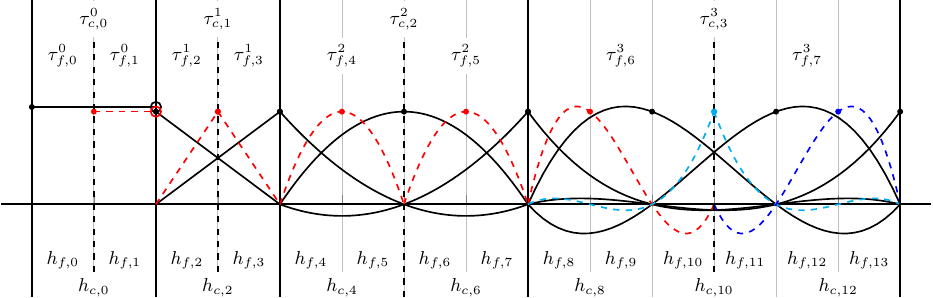}}
    \caption{One-dimensional coarse (solid) and surplus (dashed) Lagrange basis functions at two adjacent grid levels $\mathcal{G}_{c/f}$. The corresponding one-dimensional elements $\tau_{c/f}^q$ and grid spacings $h_{c/f}$ are also shown.}
    \label{fig:inter_level_basis}
\end{figure}

The above results are valid for any choice of the nodal basis for the nested spaces $V_j\subset V_{j+1}$.
Here we focus on the particular case of the piecewise polynomial basis on dyadically refined uniform grids of the unit interval in one dimension.

Without loss of generality, consider a two-level decomposition scheme for the coarse and fine grids $\mathcal{G}_j\subset\mathcal{G}_{j+1}$.
Define a 1D element $\tau\in\mathcal{T}_{j}$ of order $q>0$ as a collection of $q+1$ consecutive nodes of the corresponding grid.
The element of order $q=0$ is defined using the c{\'a}dl{\'a}g (right continuous with left limit) nodal basis function, see Figure~\ref{fig:inter_level_basis} for the illustration.
Each grid $\mathcal{G}_j$ has $2^j$ elements, and we assume that $\mathcal{G}_0$ has $q+1$ nodes.
We also use the primed indices to denote the local-to-the-element numbering of nodes, e.g., $p'\in[0,\hdots,q] \leftrightarrow p\in\Delta_j^\tau$.
Finally, $\tau_k$ denotes the element that contains the node with index $k$.

In this case, one has
\begin{align*}
    \predictor_j = \underbrace{\predictor^q\overt\hdots\overt\predictor^q}_{2^j\text{ times}},
\end{align*}
with the $q$-order interpolation matrix $\predictor^q\in\mathbb{R}^{q\times(q+1)}$ defined in Appendix~\ref{app:interpmat}. 
The refinement equations involving the interpolation matrix then simplify to
\begin{align*}
    \varphi_{j,k} 
    &= \varphi_{j+1,k} + \sum_{p\in\nabla_{j}} [\predictor_j]_{p,k} \varphi_{j+1,p}
    =  \varphi_{j+1,k} + \sum_{p\in\nabla_{j}^{\tau_k}} [\predictor^q]_{p',k'} \varphi_{j+1,p}
    \quad
    k\in\Delta_j,
    \\
    \tilde{\psi}_{j,k} 
    &= \tilde{\varphi}_{j+1,k} - \sum_{p\in\Delta_j} [\predictor_j]_{k,p} \tilde{\varphi}_{j+1,p}
    = \tilde{\varphi}_{j+1,k} - \sum_{p\in\Delta_j^{\tau_k}} [\predictor^q]_{k',p'} \tilde{\varphi}_{j+1,p},
    \quad
    k\in\nabla_{j}.
\end{align*}

\subsubsection{Normalization}

The $L^2$ norms of the one-dimensional basis functions can be calculated exactly using the Newton-Cotes quadratures since each $\varphi_{j,k}$ is a compactly supported piecewise polynomial. 
This gives
\begin{align}
    \|\varphi_{j,k}\|^2 
    &:= \sum_{\tau\in\mathcal{T}_j} \int_{\tau} \varphi_{j,k}^2 dx 
    = 2^{-j} \sum_{\tau\in\mathcal{T}_j} \sum_{p\in\Delta_{j+1}^{\tau}} w_{p'} \big[\mathsf{C}_j]_{p,k}^2,
    \\
    \|\psi_{j,k}\|^2 
    &:= \sum_{\tau\in\mathcal{T}_j} \int_{\tau} \psi_{j,k}^2 dx 
    = 2^{-j} \sum_{\tau\in\mathcal{T}_j} \sum_{p\in\Delta_{j+1}^{\tau}} w_{p'} \big[\mathsf{D}_j]_{p,k}^2,
\end{align}
where $2^{-j}$ is the length of the element of the grid $\mathcal{G}_j$.
The quadrature weights $w_{p'}$ can be found, e.g., in \cite{abramowitz1988handbook,Sermutlu_2022}.
Since each $\varphi_{j,k}$ is supported on at most two elements, for the indices $k$ inside the element, we have
\begin{align}
    \|\varphi_{j,k}\|^2 
    = \int_{\tau_{k}} \varphi_{j,k}^2 dx 
    = 2^{-j} \Big( w_{k'} + \sum_{p'=0}^{q-1} w_{2p'+1} \big[\predictor^q]_{p',k'}^2 \Big).
\end{align}
Similarly, for the boundary nodes shared between the elements, we get
\begin{align}
    \|\varphi_{j,k'}\|^2 
    = 2^{1-j} \Big( w_{0} + \sum_{p'=0}^{q-1} w_{2p'+1} \big[\predictor^q]_{p',k'}^2 \Big).
\end{align}

\subsubsection{Interpolating projectors}

\begin{figure}[t]
	\centering
    \def\h{0.21\textwidth}
    \begin{flushright}
    \includegraphics[width=0.28\textwidth]{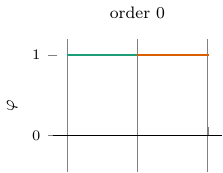}\;
    \includegraphics[width=\h]{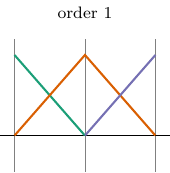}\;
    \includegraphics[width=\h]{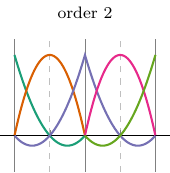}\;
    \includegraphics[width=\h]{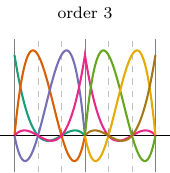}
    \\[1em]
    \includegraphics[width=0.3\textwidth]{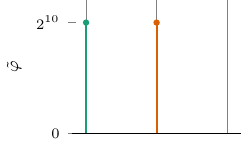}\;
    \includegraphics[width=\h]{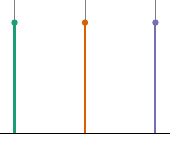}\;
    \includegraphics[width=\h]{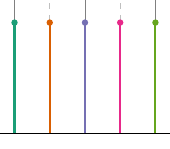}\;
    \includegraphics[width=\h]{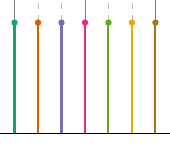}
    \\[1em]
    \includegraphics[width=0.28\textwidth]{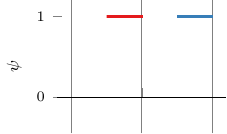}\;
    \includegraphics[width=\h]{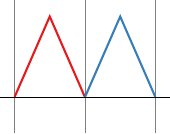}\;
    \includegraphics[width=\h]{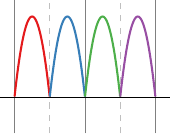}\;
    \includegraphics[width=\h]{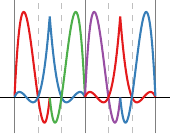}
    \\[1em]
    \includegraphics[width=0.31\textwidth]{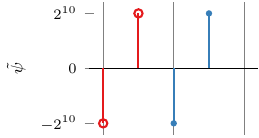}\;
    \includegraphics[width=\h]{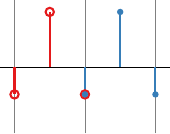}\;
    \includegraphics[width=\h]{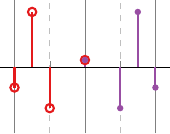}\;
    \includegraphics[width=\h]{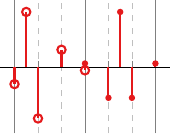}
    \end{flushright}
     \caption{The primal and dual basis functions induced by $\mathcal{P}_j=\mathcal{I}_j$ at level $1$.
     For clarity, only one basis vector per element is shown for the dual wavelet basis~$\tilde{\psi}$.
     }
     \label{fig:interpolation_basis} 
\end{figure}

For the case of interpolating basis with $\mathcal{P}_j=\mathcal{I}_j$, one gets $\matop{U}_j = \mathvec{0}$ which simplifies the refinement equations \eqref{eq:basis_refined_phi}-\eqref{eq:basis_refined_psi}, \eqref{eq:basis_refined_dualphi}-\eqref{eq:basis_refined_dualpsi} yielding
\begin{align*}
    \mathvec{\varphi}_{j} 
    &= \Big\{ \varphi_{j,k}: k\in\Delta_j \Big\},
    \qquad
    \mathvec{\psi}_{j} 
    = \Big\{ \varphi_{j+1,k} : k\in\nabla_{j} \Big\}.
\end{align*}
So the approximation basis is just the nodal basis of the grid at level $j$
while the wavelet basis consists of the next-level nodal basis vectors at the surplus nodes in~$\nabla_j$.

For the dual bases, one gets
\begin{align*}
    \tilde{\mathvec{\varphi}}_{j} 
    &= \Big\{ \tilde{\varphi}_{j+1,k}: k\in\Delta_j \Big\}
    = \Big\{ 2^J \varphi_{J,k}: k\in\Delta_j \Big\}
    \xrightarrow{J\to\infty} \Big\{ \delta(\cdot-x_k): k\in\Delta_j \Big\},
    \\
    \tilde{\mathvec{\psi}}_{j} 
    &= \Big\{ \tilde{\varphi}_{j+1,k} - \sum_{p\in\Delta_j^{\tau_k}} [\predictor^q]_{k',p'} \tilde{\varphi}_{j+1,p} : k\in\nabla_{j} \Big\}
    \\
    &\xrightarrow{J\to\infty} \Big\{ \delta(\cdot-x_k) - \sum_{p\in\Delta_j^{\tau_{k}}} [\predictor^q]_{k',p'} \delta\big(\cdot-x_{p}\big) : k\in\nabla_{j} \Big\}.
\end{align*}
Hence, the dual approximation basis vectors are the Dirac measures centered at the nodes of $\Delta_j$.
The dual wavelet vectors have a Dirac measure at the corresponding node of $\nabla_j$ and $q+1$ additional Diracs at the nodes of $\Delta_j^{\tau_k}$ inside the corresponding element $\tau_k$, see Figure~\ref{fig:interpolation_basis}.

The normalization of the wavelet basis simplifies to
\begin{align}
    \|\psi_{j,k}\|^2 
    &= \frac{q}{2^j} \sum_{\tau_k\in\mathcal{T}_j} \sum_{p\in\Delta_{j+1}^{\tau_k}} w_{p'} \big[\matop{0}\diamond\eye{\nabla_j}]_{p,k}^2
    = \frac{q}{2^j} \sum_{p'=0}^{q} w_{2p'+1}.
\end{align}

\subsubsection{Piecewise constant predictor}

In this case, one has $\predictor_j:=\eye{}$, $\matop{U}_j:=\frac{1}{2}\eye{}$, and $\varepsilon_{J,k} = 2^{-J}$.
This gives
\begin{align*}
    \mathvec{\varphi}_{j} 
    &=\{ \varphi_{j,k}: k\in\Delta_j\},
    \qquad
    \mathvec{\psi}_{j} 
    = \left\{ \frac{1}{2} ( \varphi_{j+1,k} - \varphi_{j+1,k-1} ) : k\in\nabla_j \right\},
\end{align*}
and
\begin{align*}
    \tilde{\mathvec{\varphi}}_{j} 
    &= \left\{ \frac{1}{2} (\tilde{\varphi}_{j+1,k}+\tilde{\varphi}_{j+1,k+1}) : k\in\Delta_j\right\}
    = \left\{ 2^{j-J} \smashoperator{\sum_{p\in\Delta_{j}^{\tau_{k}}}} \tilde{\varphi}_{J,p} : k\in\Delta_j\right\},
    \\[0.5em]
    \tilde{\mathvec{\psi}}_{j} 
    &= \left\{ \tilde{\varphi}_{j+1,k}-\tilde{\varphi}_{j+1,k-1} : k\in\nabla_j\right\}
    = \left\{ 2^{j+1-J} \Big( {\sum_{p\in\Delta_{j+1}^{\tau_{k}}}} \tilde{\varphi}_{J,p} - \smashoperator{\sum_{p\in\Delta_{j+1}^{\tau_{k-1}}}} \tilde{\varphi}_{J,p} \Big) : k\in\nabla_j\right\}.
\end{align*}
By noting that $\smashoperator{\sum_{p\in\Delta_{j}^{\tau_{k}}}} \tilde{\varphi}_{J,p} = 2^J \smashoperator{\sum_{p\in\Delta_{j}^{\tau_{k}}}} \varphi_{J,p} = 2^J \varphi_{j,k}$, we get
\begin{align*}
    \tilde{\mathvec{\varphi}}_{j}  = \left\{ 2^{j} \varphi_{j,k} : k\in\Delta_j\right\},
    \qquad
    \tilde{\mathvec{\psi}}_{j}  = \left\{ 2^{j+1} (\varphi_{j+1,k}-\varphi_{j+1,k-1}) : k\in\nabla_j\right\}.
\end{align*}
The normalization of the basis is trivial and is given by
\begin{align*}
    \|\varphi_{j,k}\|^2 = \frac{1}{2^j},
    \qquad
    \|\psi_{j,k}\|^2 = \frac{1}{2^{j+2}}.
\end{align*}
One can verify that
\begin{align*}
    \langle \varphi_{j,k}, \tilde{\varphi}_{j,k'} \rangle &= \langle \varphi_{j,k}, \varphi_{j,k'} \rangle = 
    \langle \psi_{j,k}, \tilde{\psi}_{j,k'} \rangle = \langle \psi_{j,k}, \psi_{j,k'} \rangle = \delta_{k,k'},
\end{align*}
i.e, this is the classical orthogonal Haar basis, see Figures~\ref{fig:projection_basis}-\ref{fig:dg_basis}.

\begin{figure}[t]
	\centering
    \def\h{0.21\textwidth}
    \begin{flushright}
    \includegraphics[width=0.3\textwidth]{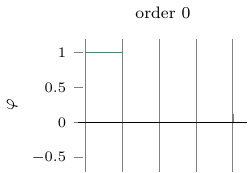}\;
    \includegraphics[width=\h]{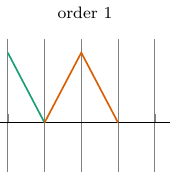}\;
    \includegraphics[width=\h]{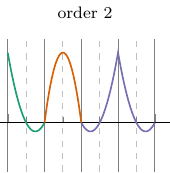}\;
    \includegraphics[width=\h]{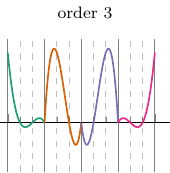}
    \\[1em]
    \includegraphics[width=0.29\textwidth]{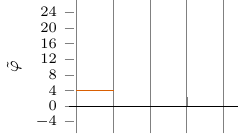}\;
    \includegraphics[width=\h]{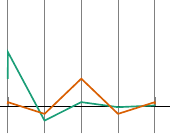}\;
    \includegraphics[width=\h]{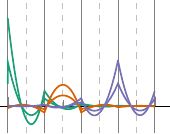}\;
    \includegraphics[width=\h]{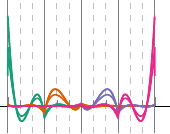}
    \\[1em]
    \includegraphics[width=0.29\textwidth]{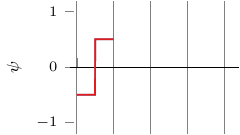}\;
    \includegraphics[width=\h]{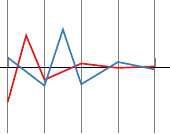}\;
    \includegraphics[width=\h]{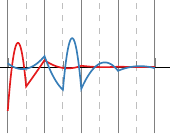}\;
    \includegraphics[width=\h]{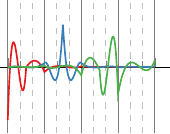}
    \\[1em]
    \includegraphics[width=0.31\textwidth]{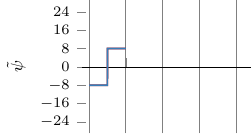}\;
    \includegraphics[width=\h]{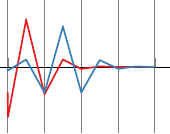}\;
    \includegraphics[width=\h]{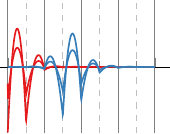}\;
    \includegraphics[width=\h]{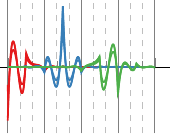}
    \end{flushright}
    \caption{The primal and dual basis functions induced by $\mathcal{P}_j^{cg}$ at level $2$. 
    For clarity, only one basis vector is shown in each element.
    }
    \label{fig:projection_basis} 
\end{figure}

\begin{figure}[t]
	\centering
    \def\h{0.21\textwidth}
    \begin{flushright}
    \includegraphics[width=0.31\textwidth]{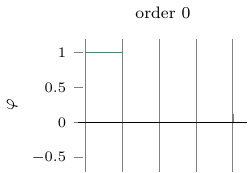}\;
    \includegraphics[width=\h]{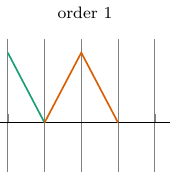}\;
    \includegraphics[width=\h]{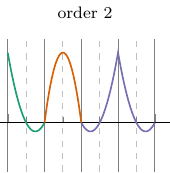}\;
    \includegraphics[width=\h]{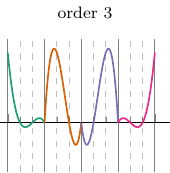}
    \\[1em]
    \includegraphics[width=0.31\textwidth]{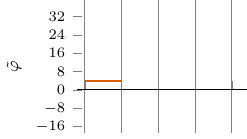}\;
    \includegraphics[width=\h]{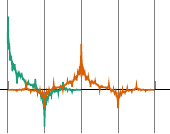}\;
    \includegraphics[width=\h]{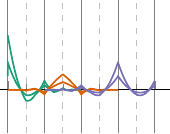}\;
    \includegraphics[width=\h]{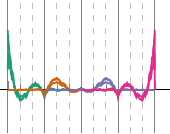}
    \\[1em]
    \includegraphics[width=0.31\textwidth]{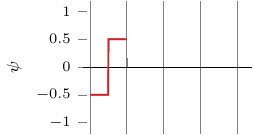}\;
    \includegraphics[width=\h]{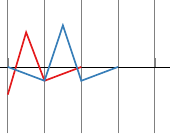}\;
    \includegraphics[width=\h]{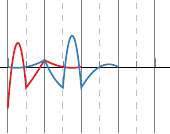}\;
    \includegraphics[width=\h]{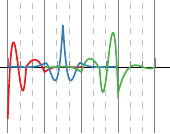}
    \\[1em]
    \includegraphics[width=0.31\textwidth]{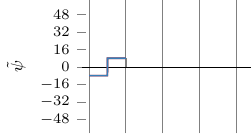}\;
    \includegraphics[width=\h]{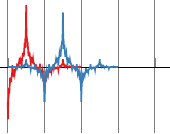}\;
    \includegraphics[width=\h]{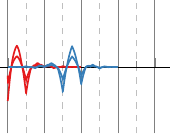}\;
    \includegraphics[width=\h]{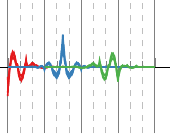}
    \end{flushright}
    \caption{The primal and dual basis functions induced by $\mathcal{P}_j^{dg}$ at level $2$.
    For clarity, only one basis vector is shown in each element.
    }
    \label{fig:dg_basis} 
\end{figure}

\subsubsection{Piecewise polynomial predictors with stable projectors.}

The local Gramm matrices $\matop{G}^{q}_{\scriptscriptstyle\Delta\Delta}$, $\matop{G}^{q}_{\scriptscriptstyle\Delta\nabla}$ of a varying order $q$ are defined in Appendix~\ref{app:grammmat}.
The update matrix spanning $r$ elements is then given by 
\begin{align*}
    \Big(\underbrace{\matop{G}^{q}_{\scriptscriptstyle\Delta\Delta} \oplus \hdots \oplus \matop{G}^{q}_{\scriptscriptstyle\Delta\Delta}}_{r\text{ times}}\Big)^{-1} 
    \Big(\underbrace{\matop{G}^{q}_{\scriptscriptstyle\Delta\nabla} \ominus \hdots \ominus \matop{G}^{q}_{\scriptscriptstyle\Delta\nabla}}_{r\text{ times}}\Big).
\end{align*}
Therefore, the update matrix induced by the global projectors $\mathcal{P}_j^{cg}$ is given by
\begin{align}\label{eq:Ucg}
    \matop{U}_j^{cg} = 
    \Big(\underbrace{\matop{G}^{q}_{\scriptscriptstyle\Delta\Delta} \oplus \hdots \oplus \matop{G}^{q}_{\scriptscriptstyle\Delta\Delta}}_{2^j\text{ times}}\Big)^{-1} 
    \Big(\underbrace{\matop{G}^{q}_{\scriptscriptstyle\Delta\nabla} \ominus \hdots \ominus \matop{G}^{q}_{\scriptscriptstyle\Delta\nabla}}_{2^j\text{ times}}\Big),
\end{align}
while the update matrix induced by the element-wise projectors $\mathcal{P}_{j}^{dg}$ is given by
\begin{align}\label{eq:Udg}
    \matop{U}_j^{dg} 
    =
    \Big( \underbrace{\eye{q+1}\oplus\hdots\oplus\eye{q+1}}_{2^j\text{ times}} \Big)^{-1}
    \Big( \underbrace{\matop{U}^q\ominus\hdots\ominus\matop{U}^q}_{2^j\text{ times}} \Big).
\end{align}
To clarify the meaning of the expression for $\matop{U}_j^{dg}$, consider the case with $q=1$ and $j=1$, which gives
\begin{align*}
    \Big( \eye{2}\oplus\eye{2}  \Big)^{-1}
    \Big( \matop{U}^2\ominus\matop{U}^2 \Big)
    &= 
    \begin{bmatrix}
        1 &   & \\
          & 2 & \\
          &   & 1
    \end{bmatrix}^{-1}
    \begin{bmatrix}
        u_{00} & u_{01} &        &        \\
        u_{10} & u_{11} & u_{00} & u_{01} \\
               &        & u_{10} & u_{11} \\
    \end{bmatrix}
    \\
    &=
    \begin{bmatrix}
        u_{00} & u_{01} &        &        \\
        \frac{u_{10}}{2} & \frac{u_{11}}{2} & \frac{u_{00}}{2} & \frac{u_{01}}{2} \\
               &        & u_{10} & u_{11} \\
    \end{bmatrix}.
\end{align*}
It is worth noting that the global update matrix $\matop{U}_j^{cg}$ is dense.
Moreover, its evaluation requires inverting the banded matrix which adds to the computational complexity and scales with the problem size.
On the other side, the matrix $\matop{U}^q\ominus\hdots\ominus\matop{U}^q$ in $\matop{U}_j^{dg}$ is comprised of identical blocks which can be explicitly precomputed. 
Figure~\ref{fig:update_scaling} shows the costs of assembling and solving the linear systems required to evaluate the update matrices $\matop{U}^{cg}$ and $\matop{U}^{dg}$ in \eqref{eq:Ucg}-\eqref{eq:Udg}.
Since evaluation of $\matop{U}^{dg}$ on the uniform grid requires matrix inversion just in one element, its cost is negligible and is omitted.
At the same time, the cost of a banded linear solver in computing $\matop{U}^{cg}$ scales super-linearly with the problem size.
The costs of assembling the matrices are also non-negligible and scale linearly.
However, $\matop{U}^{cg}$ requires assembling two matrices while $\matop{U}^{dg}$ needs only one.

\begin{figure}[t]
    \centering
    \pgfplotsset{every tick label/.append style={font=\scriptsize}}
    \pgfplotsset{every axis label/.append style={font=\footnotesize}}
    \pgfplotsset{every axis title/.append style={font=\footnotesize}} 	
    
    \foreach \order in {1,2,4}
    {
        \begin{tikzpicture}[trim axis left,trim axis right]
            \begin{axis} [
                width=0.42\textwidth,
                ymode = normal,
                xticklabels={,,},
                yticklabels={,,},
                enlargelimits=false,
                axis on top,
                axis line style={draw=none},
                title={order $\order$},
                xlabel=\# of grid nodes,
                ylabel=\ifthenelse{\equal{\order}{1}}{wall time}{},
                tick style={draw=none},
                very thick ]
                \addplot graphics [points={(0,0) (1,1)}, includegraphics={trim=0 0 0 0,clip}] {{{images/assemble_solve_vs_dof_\order}.pdf}};
            \end{axis}
        \end{tikzpicture}
    }
    \caption{The costs of assembling and solving the linear systems in \eqref{eq:Ucg}-\eqref{eq:Udg} averaged over $100$ runs..}
    \label{fig:update_scaling}
\end{figure}

In either of two cases, the explicit representation of the basis functions is challenging.
Figures~\ref{fig:projection_basis}-\ref{fig:dg_basis} show the basis functions induced by $\mathcal{P}^{cg}$ and $\mathcal{P}^{dg}$ generated implicitly using the refinement equations in \eqref{eq:refinement_phi}-\eqref{eq:refinement_psi}, \eqref{eq:refinement_dualphi}-\eqref{eq:refinement_dualpsi}.
One can see that $\mathcal{P}^{cg}$ yield bases with global support while $\mathcal{P}^{dg}$ induced bases are supported only at the neighboring elements.
Another observation is the fact that the dual basis functions are given by the superposition of $q$ different functions corresponding to different nodes of the elements. 
Particularly, the continuous functions in Figures~\ref{fig:projection_basis}-\ref{fig:dg_basis} are obtained by connecting the values at the matching nodes of each element.
Also note the the jump discontinuities at the boundaries of the interval.

\subsection{Tensor grids}

Extending the one-dimensional construction to the case of tensor grids is almost trivial. 
Given the matrix $\mathsf{M}$ of the one-dimensional transform in \eqref{eq:matmgard}-\eqref{eq:matinvmgard}, one can simply use the Kronecker product as follows

\begin{align*}
    \tilde{\mathvec{\alpha}}_0
    &=
    \bigotimes_{d=1}^D
    \mathsf{M}
    \cdot
    \mathvec{\alpha}_J,
    \qquad
    \mathvec{\alpha}_J
    =
    \bigotimes_{d=1}^D
    \mathsf{M}^{-1}
    \cdot
    \tilde{\mathvec{\alpha}}_0.
\end{align*}
Alternatively, one can take the tensor product of the matrices in \eqref{eq:levelmatmgard}, and apply these transforms level-by-level.
In wavelet methods, the first approach is referred to as fully separable transform, while the second approach, proposed by Mallat in \cite{Mallat1989}, is more commonly employed in practice.

\section{Results}
\label{sec:results}

In this section, we discuss the use of piecewise polynomials in wavelet constructions and compare this approach to traditional translation-invariant filter banks. 
We evaluate the performance of these methods in terms of coefficient decay for different sampling strategies. 
Next, we explore the impact of projection operator order on data compression, analyzing how it affects the compression ratio and reconstruction quality. 
These comparisons highlight the trade-offs involved in selecting the appropriate approach for different data types.

\subsection{Piecewise polynomials.}
The proposed wavelet construction utilizes a hierarchical decomposition of piecewise polynomial spaces, which contrasts with the traditional translation-invariant filter bank formulation. 
In this example, we compare these two approaches by considering the function
\begin{align*}
    f(x) = \sin(2\pi x) + \frac{1}{3} \sin(11\pi x) + \frac{1}{5} \sin(23\pi x),
    \qquad
    x\in[0,1],
\end{align*}
and its piecewise linear interpolant $\mathcal{I}f$ defined on a grid with $N_e = 2^{10}$ elements. 
The function $\mathcal{I}f$ is linear within each element and continuous across elements. 
Figure~\ref{fig:projections} illustrates this on a coarser grid.

We evaluate this function at $N_p$ nodes of the uniform grid over the unit interval and compare the decay of the transform coefficients using Daubechies orthogonal wavelets (`db2') and three MGARD transforms with different projection operators. 
All considered transforms have two vanishing moments, meaning their wavelet bases are orthogonal to polynomials of orders zero and one. 
Additionally, MGARD wavelets are orthogonal to piecewise polynomials by design.

Figure~\ref{fig:ex1_coef_decay} highlights these distinctions. 
It shows that while all transforms yield similar results when applied to data evaluated at the element nodes, the MGARD transform coefficients decay much faster when the data is sampled at multiple nodes within the elements. 
In contrast, Daubechies wavelets produce vanishing coefficients only when the wavelet is entirely contained within an element. 
The lack of $C^1$ smoothness across elements in the Daubechies wavelets results in larger coefficients that propagate through the hierarchy.

\begin{figure}[tb]
    \centering
    \def\w{0.28\textwidth}
    \stackinset{c}{0pt}{t}{-12pt}{\small $\mathcal{I}f(x)$}{\includegraphics[width=\w]{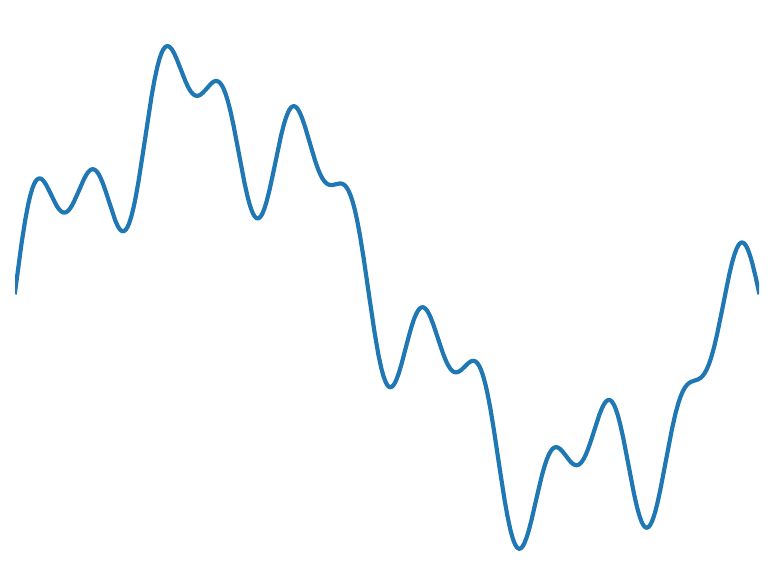}}
    \qquad
    \stackinset{c}{0pt}{t}{-12pt}{\small $N_e=2^{10}$, $N_p=2^{10}$}{\includegraphics[width=\w]{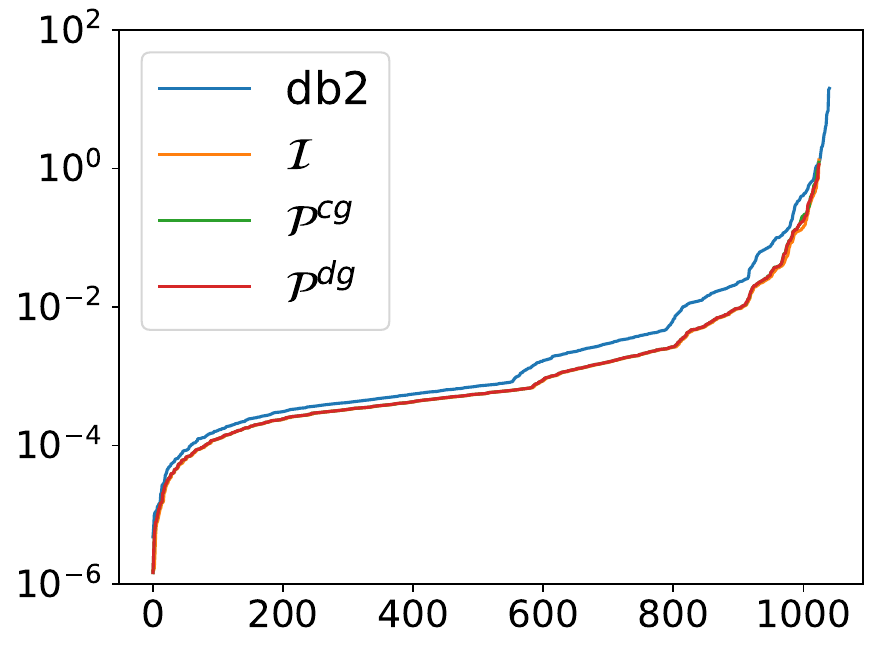}}
    \qquad
    \stackinset{c}{0pt}{t}{-12pt}{\small $N_e=2^{10}$, $N_p=2^{12}$}{\includegraphics[width=\w]{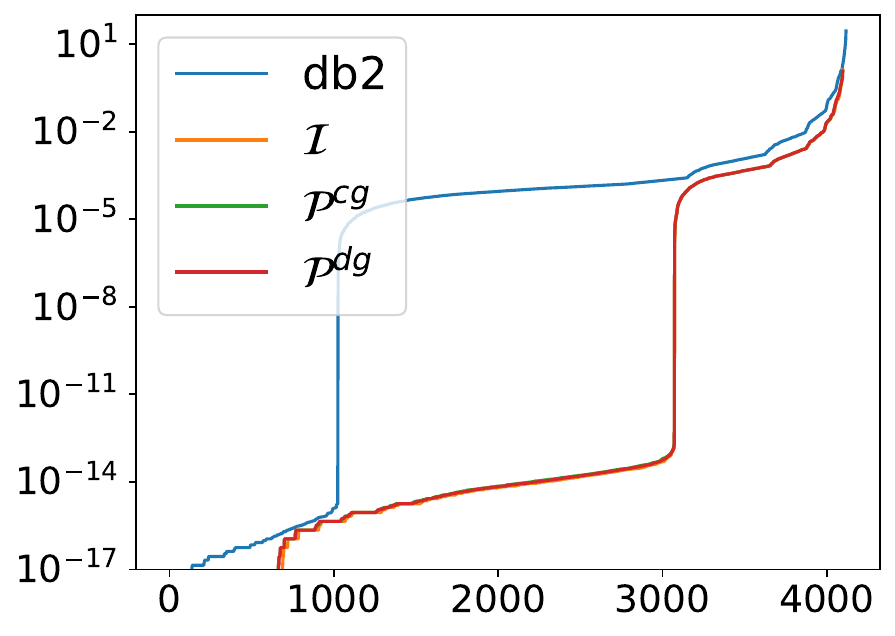}}
    \caption{Sorted coefficients of the wavelet transforms with two vanishing moments for the piecewise linear function $\mathcal{I}f(x)$ defined on $N_e$ elements and evaluated at $N_p$ nodes. }
    \label{fig:ex1_coef_decay}
\end{figure}

\subsection{Order comparison.}

In this example, we study the impact of the order on data compression.
As a measure of the compression quality, we use the $L^2$ error of the reconstructed signal as a function of the compression ratio (CR).
Here we define the compression ratio as the relative number of the transform coefficients above the given threshold, and we do not consider quantization or entropy coding.

Figure~\ref{fig:ex2_image} shows the coefficient decays and the compression ratios of the MGARD transform induced by global projectors $\mathcal{P}^{cg}$ of different orders.
The top row depicts results for an image from the USC-SIPI image database\footnote{\url{https://sipi.usc.edu/database/}}.
It can be observed that all methods perform nearly identically, with lower-order predictors performing the best. 
This is due to the limited smoothness of natural images, as explained in \cite{DeVore1992_smoothness, DeVore1992_smoothness_1994}. 
Therefore, lower-order methods are expected to yield better results.

The second row of Figure~\ref{fig:ex2_image} presents data from the boundary element (BEM) approximation of the solution to the Helmholtz equation describing scattering from a sphere. 
This data was generated using the boundary element software package in \cite{betcke2021bempp}. 
BEM solutions to the Laplace and Helmholtz equations are known to be smooth away from the boundary. 
This is evident from the decay of the wavelet coefficients for this problem. 
As a result, higher-order methods are much superior in this case, as indicated by the observed compression ratios.

\begin{figure}[tb]
    \centering
    \def\w{0.23\textwidth}
    \stackinset{c}{0pt}{t}{-12pt}{\small data}{\includegraphics[height=\w]{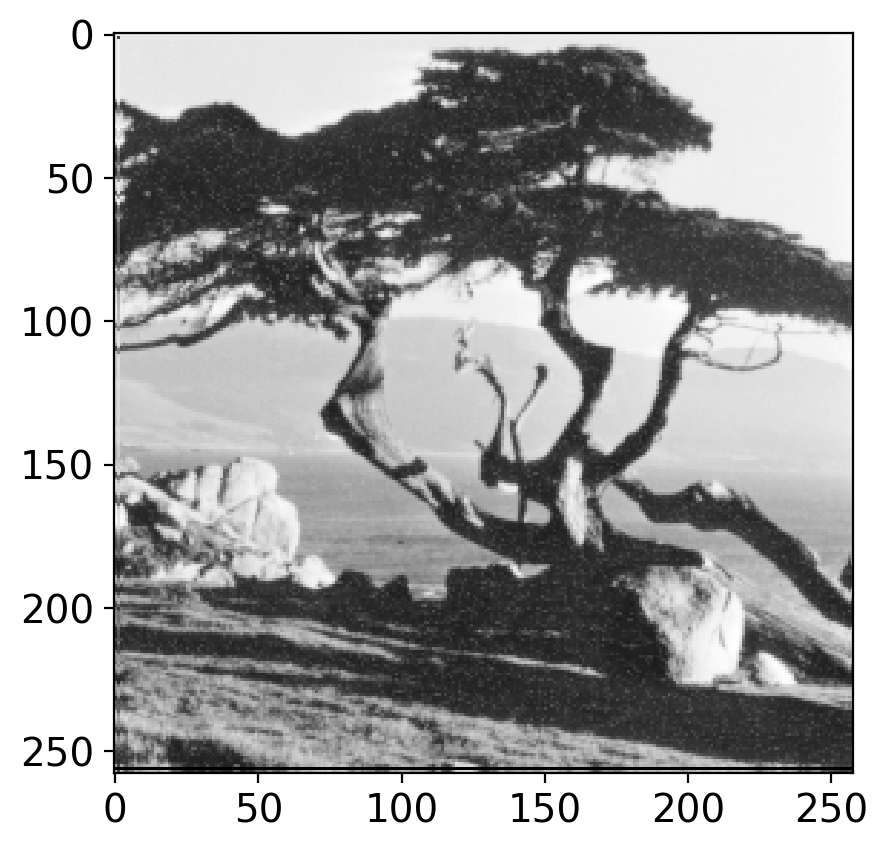}}
    \quad
    \stackinset{c}{0pt}{t}{-12pt}{\small coefficient decay}{\includegraphics[height=\w]{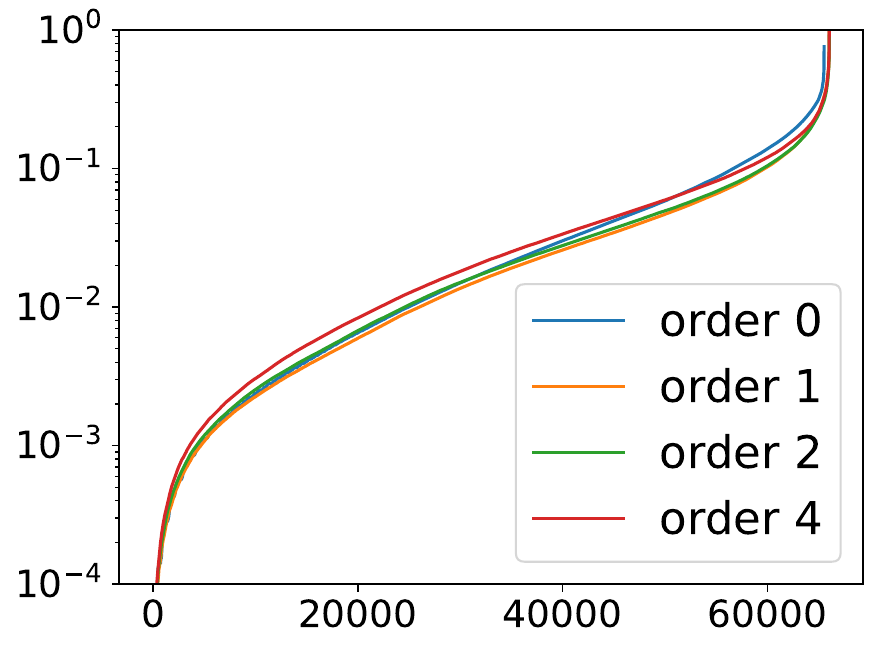}}
    \quad
    \stackinset{c}{0pt}{t}{-12pt}{\small compression ratio vs error}{\includegraphics[height=\w]{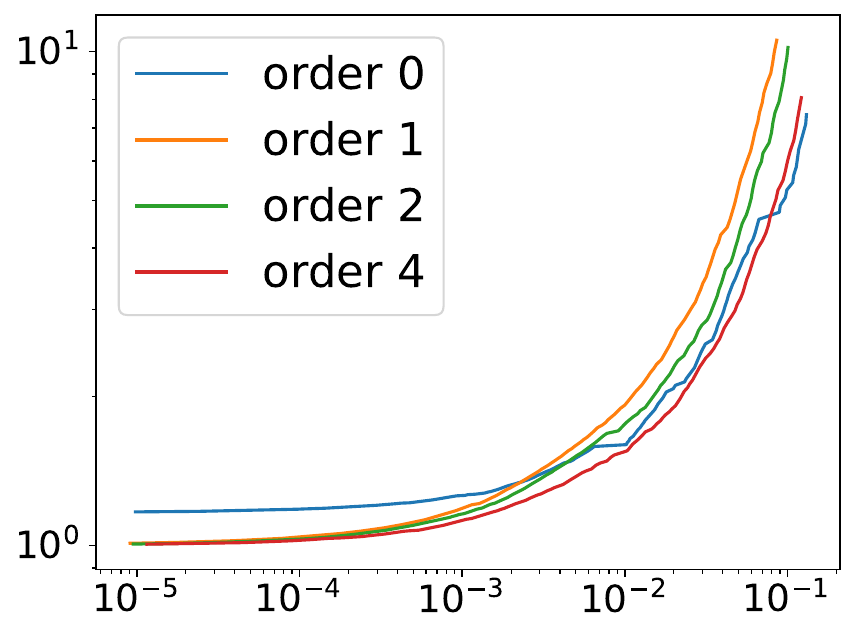}}
    \\[0.5em]
    \stackinset{c}{0pt}{t}{-12pt}{}{\includegraphics[height=\w]{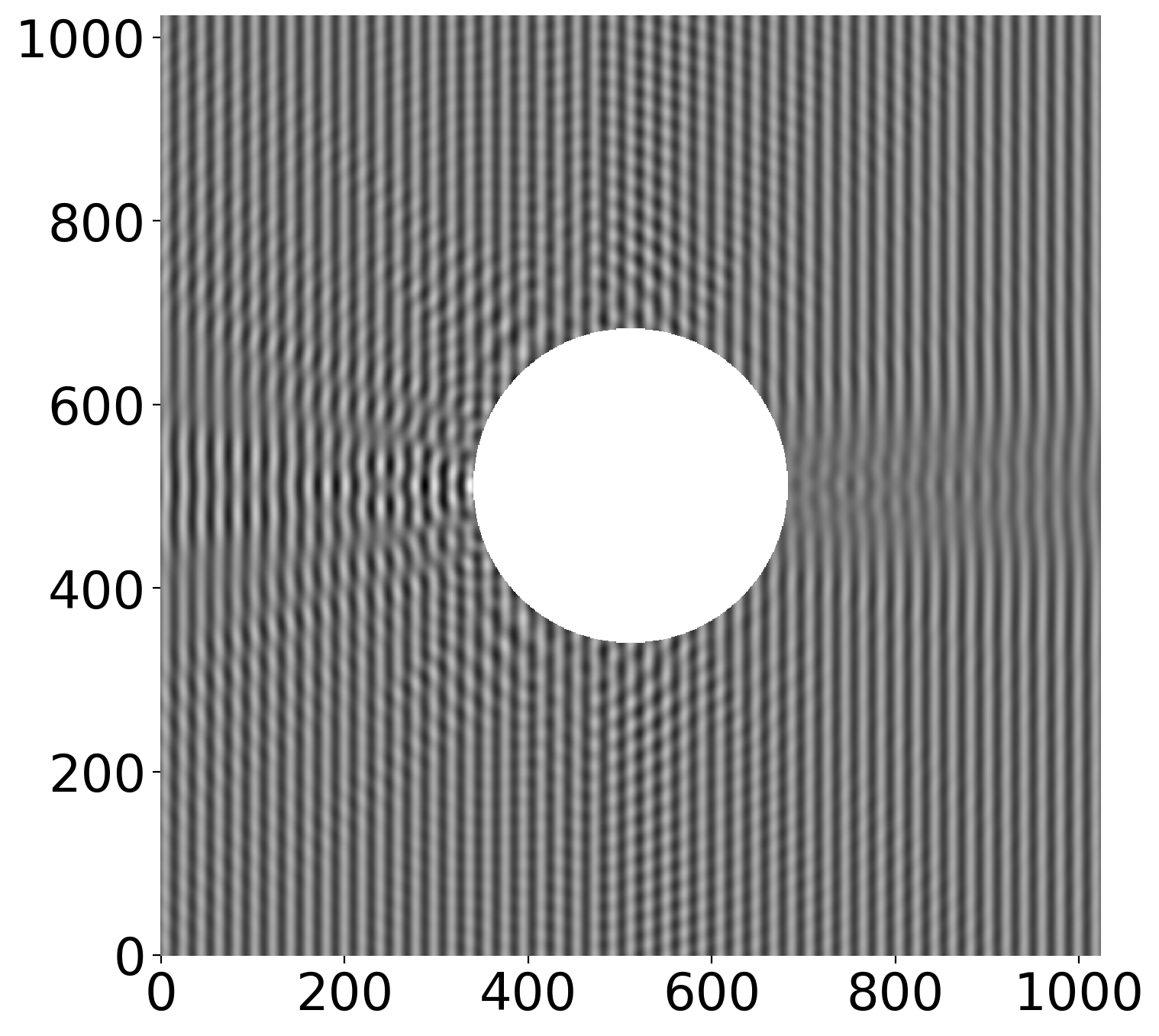}}
    \quad
    \stackinset{c}{0pt}{t}{-12pt}{}{\includegraphics[height=\w]{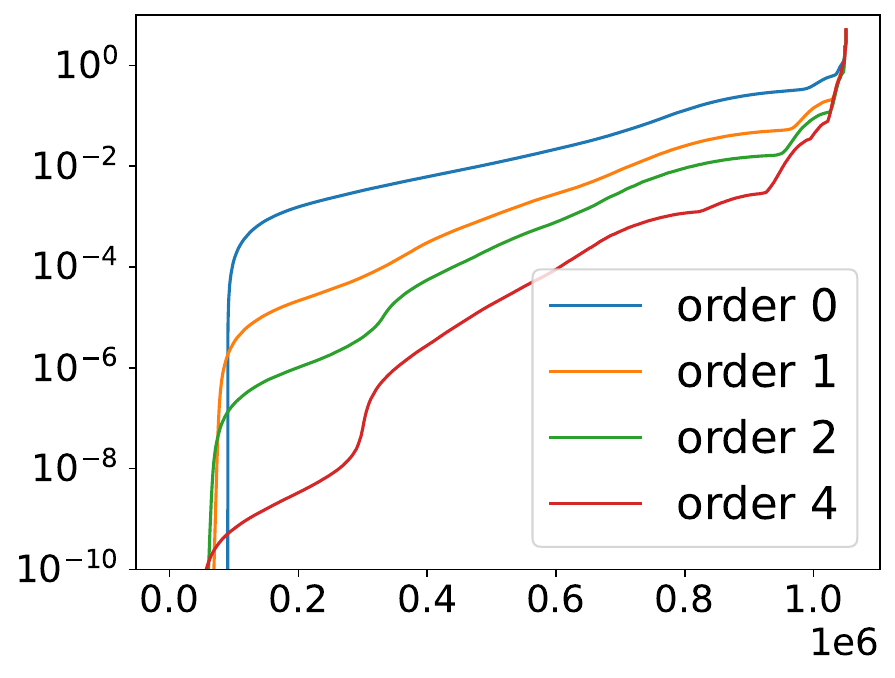}}
    \quad
    \stackinset{c}{0pt}{t}{-12pt}{}{\includegraphics[height=\w]{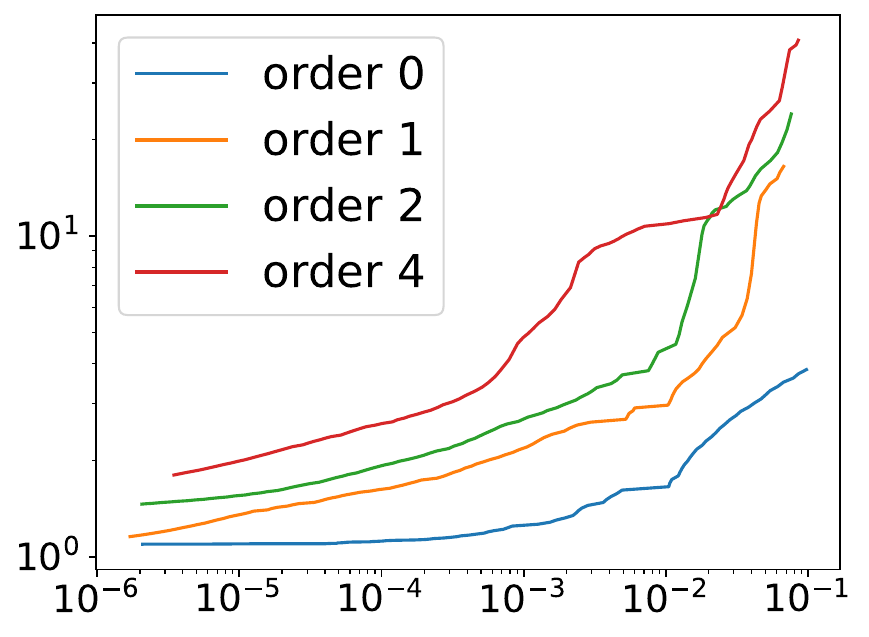}}
    \caption{Impact of the order on the compression ratio.}
    \label{fig:ex2_image}
\end{figure}

\section{Conclusion and future work}
\label{sec:conclusion}

In this work, we reformulated the MGARD algorithm as a wavelet transform on the interval. 
This reformulation provides new insights into the design and analysis of algorithm extensions. 
Specifically, we proposed a new family of projection operators that lead to compactly supported wavelet basis functions without compromising the stability of the transform. 
Moreover, the proposed formulation enables the construction of wavelets with an arbitrary number of vanishing moments by design. 
We demonstrated that this property results in better compression of data with matching regularity.
Several questions remain to be addressed in future works, including data-adaptive order selection, optimal quantization, and rigorous error analysis.

\section*{Acknowledgments}
This research was supported by the SIRIUS-2 ASCR research project, the Scientific Discovery through Advanced Computing (SciDAC) program, specifically the RAPIDS-2 and FastMath SciDAC institutes, and the GE-ORNL CRADA data reduction project. 
This research used resources of the Oak Ridge Leadership Computing Facility, which is a DOE Office of Science User Facility.









\medskip
Received xxxx 20xx; revised xxxx 20xx; early access xxxx 20xx.
\medskip

\appendix
\def\grid{\mathcal{G}}

\section{Appendix}

\subsection{Polynomial interpolation on dyadically nested uniform grids}
\label{app:interpmat}

\begin{lemma}\label{lemma:interpmat}
    Given two nested one-dimensional uniform grids $\grid_{0}\subset\grid_{1}$ with $(r, 2r)$ elements, $(rq, 2rq)$ intervals, and $(rq+1, 2rq+1)$ nodes respectively, the $q$-th order polynomial interpolation matrix $\predictor^{r,q}$ mapping the values $\{u_{2n}:n=0,\hdots,rq\}$ at the nodes of $\grid_{0}$ to the values $\{u_{2m+1}:m=0,\hdots,rq-1\}$ at the nodes of the surplus grid $\grid_{1}\setminus\grid_{0}$ has the form
    \begin{align*}
        &\predictor^{r,q} = \underbrace{\predictor^{q} \overt \hdots \overt \predictor^{q}}_{r\text{ times}},
        \qquad r>0, \; q\geq 0,
    \end{align*}
    with $\overt$ defined in Definition~\ref{def:stackelemets}, $\predictor^0 = [1]$, and
    \begin{align*}
        [\predictor^q]_{m,n}
        = \frac{(-1)^{m+n}}{2^q(1+2m-2n)} \frac{(2m+1)!!(2q-2m-1)!!}{n!(q-n)!},
        \qquad
        m&=0,\hdots,q-1,
        \\
        n&=0,\hdots,q,
        \quad q > 0,
    \end{align*}
    where $p!!=1\cdot 3\cdot\hdots\cdot(p-2)p$ is the double factorial.
\end{lemma}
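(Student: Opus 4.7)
The plan is to separate the claim into two pieces: the block-structural decomposition $\predictor^{r,q} = \predictor^q \overt \hdots \overt \predictor^q$, which is combinatorial, and the closed form for the entries of $\predictor^q$, which is a direct Lagrange evaluation.

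For the first piece, I would invoke the locality of piecewise degree-$q$ Lagrange interpolation. On each of the $r$ coarse elements, the $q+1$ coarse nodal values uniquely determine the interpolating polynomial, so the $q$ surplus-node values interior to that element are linear combinations of those $q+1$ coarse values only. Listed sequentially, the surplus rows of distinct elements are disjoint, while the coarse columns of consecutive elements share exactly one index, corresponding to the shared boundary node. This row-disjoint, one-column-overlap stacking is precisely the operator $\overt$ from Definition~\ref{def:stackelemets}, which yields the claimed block decomposition with one $\predictor^q\in\mathbb{R}^{q\times(q+1)}$ block per element.

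For the entry formula, I would rescale a single element so its coarse nodes sit at local positions $0, 2, \hdots, 2q$ and its surplus nodes at $1, 3, \hdots, 2q-1$. With this normalization, $[\predictor^q]_{m,n} = L_n(2m+1)$ where $L_n(x) = \prod_{k \neq n}(x - 2k)/(2n - 2k)$ is the $n$-th Lagrange basis polynomial on the coarse element. The denominator factors as $2^q \prod_{k \neq n}(n-k) = 2^q (-1)^{q-n} n!(q-n)!$. For the numerator, I would form the complete product $\prod_{k=0}^{q}(2(m-k)+1)$ and then divide out the omitted factor $2(m-n)+1$; splitting the complete product at $k = m$ gives the positive odd integers $1, 3, \hdots, 2m+1$ from $k \leq m$ contributing $(2m+1)!!$, and the negative odd integers $-1, -3, \hdots, -(2q-2m-1)$ from $k > m$ contributing $(-1)^{q-m}(2q-2m-1)!!$. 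Combining, the overall sign is $(-1)^{q-m}/(-1)^{q-n} = (-1)^{m+n}$, and the claimed closed form follows.

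The main obstacle, though routine, is keeping the signs straight across the two switches, $k = m$ in the numerator and $k = n$ in the denominator; once these alternating-sign contributions are identified, the remaining algebra is mechanical. The case $q = 0$ is not addressed by the Lagrange computation above and is simply handled by the convention $\predictor^0 = [1]$ corresponding to the c{\'a}dl{\'a}g nodal basis used in the piecewise constant setting.
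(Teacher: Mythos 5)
Your proposal is correct and follows essentially the same route as the paper: evaluate the Lagrange basis polynomial of the coarse element at the surplus nodes of the rescaled uniform grid, then factor the numerator and denominator products into double-factorial and factorial terms with the sign bookkeeping you describe, which reproduces the stated entries of $\predictor^q$. Your explicit justification of the block decomposition via $\overt$ (one block per element, adjacent blocks sharing a single coarse column) merely spells out what the paper leaves implicit, so no substantive difference remains.
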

\begin{proof}
    The case of $q=0$ is trivial. 
    For $q>0$, the values of the Lagrange interpolant $\mathcal{I}^qu$ at the nodes of $\grid_{1}\setminus\grid_{0}$ are given by
    \begin{align*}
        [\mathcal{I}^qu]_{2m+1}
        &:= \sum_{n=0}^{q} u_{2n} \prod_{\substack{k=0\\k\neq n}}^{q} \frac{x_{2m+1}-x_{2k}}{x_{2n}-x_{2k}}
        \\
        &= \big[ \predictor^q \cdot [u_0,u_2,\hdots,u_{2q}]^T \big]_{2m+1},
        \quad
        m=0,\hdots,q-1.
    \end{align*}
    For a uniform grid with $x_n=nh$, we get
    \begin{align*}
        [\predictor^q]_{m,n}
        &:= \prod_{\substack{k=0\\k\neq n}}^{q} \frac{x_{2m+1}-x_{2k}}{x_{2n}-x_{2k}}
        = \prod_{\substack{k=0\\k\neq n}}^{q} \frac{2(m-k)+1}{2(n-k)}.
    \end{align*}
    To compute the denominator, collect the positive and negative terms in the product to get
    \begin{align*}
        &\prod_{\substack{k=0\\k\neq n}}^{q} \frac{1}{2(n-k)}
        = \frac{1}{2^q} \prod_{k=0}^{n-1} \frac{1}{n-k} \prod_{k=n+1}^{q} \frac{1}{n-k}
        = \frac{1}{2^q} \frac{(-1)^{q-n}}{n!(q-n)!}.
    \end{align*}
    Similarly, by collecting the positive and negative terms in the numerator, one gets
    \begin{align*}
        \prod_{\substack{k=0\\k\neq n}}^{q} (2(m-k)+1)
        &= \frac{(-1)^{q-m}(2m+1)!!(2q-2m-1)!!}{1-2n+2m}.
    \end{align*}
\end{proof}

\subsection{Gramm matrices for the elements of dyadically nested uniform grids.}
\label{app:grammmat}

\begin{lemma}\label{lemma:updatemat}
    Under the same conditions as in Lemma~\ref{lemma:interpmat}, the $q$-th order update matrix $\matop{U}^{r,q}$ mapping the values $\{u_{2m+1}:m=0,\hdots,rq-1\}$ at the nodes of the surplus grid $\grid_{1}\setminus\grid_{0}$ to the values $\{u_{2n}:n=0,\hdots,rq\}$ at the nodes of $\grid_{0}$ has the form
    \begin{align*}
        \matop{U}^{r,q} := \left(\matop{G}^{r,q}_{\scriptscriptstyle\Delta\Delta}\right)^{-1} \matop{G}^{r,q}_{\scriptscriptstyle\Delta\nabla}
    \end{align*}
    where
    \begin{align*}
        \matop{G}^{r,q}_{\scriptscriptstyle\Delta\Delta} &= 
        \underbrace{\matop{G}^{q}_{\scriptscriptstyle\Delta\Delta} \oplus \hdots \oplus \matop{G}^{q}_{\scriptscriptstyle\Delta\Delta}}_{r\text{ times}},
        \qquad
        \matop{G}^{r,q}_{\scriptscriptstyle\Delta\nabla} = 
        \underbrace{\matop{G}^{q}_{\scriptscriptstyle\Delta\nabla} \ominus \hdots \ominus \matop{G}^{q}_{\scriptscriptstyle\Delta\nabla}}_{r\text{ times}},
        \qquad r>0, \; q\geq 0
    \end{align*}
    with $\oplus$ and $\ominus$ defined in Definition~\ref{def:stackelemets}.
    The entries of the Gramm matrices are given by
    \begin{align*}
        \matop{G}_{\scriptscriptstyle\Delta\Delta}^0 = [2h],
        \qquad
        \matop{G}_{\scriptscriptstyle\Delta\nabla}^0 = [h]
    \end{align*}
    and, for $q>0$,
    {\small
    \begin{align*}
        &[\matop{G}_{\scriptscriptstyle\Delta\Delta}^q]_{n,m}
        = \frac{2h(-1)^{n+m}}{n!m!(q-n)!(q-m)!} 
        \int\displaylimits_0^{q} \frac{\prod_{\substack{k=0}}^{q} (x-k)^2}{(x-n)(x-m)} dx,
        \quad n=0,\hdots,q,
        \; m=0,\hdots,q,
        \\
        &[\matop{G}_{\scriptscriptstyle\Delta\nabla}^q]_{n,m}
        = 
        \frac{h(-1)^{n+1}}{2^qn!(q-n)!}
        \\
        &\times
        \begin{cases}
            \displaystyle
            \frac{1}{(2m+1)!(q-2m-1)!} 
            \int\displaylimits_{0}^{q} \frac{\prod_{\substack{k=0}}^{q} (x-2k) (x-k)}{(x-2n)(x-2m-1)} dx
            & m\leq q/2-1,
            \\[2em]
            \displaystyle
            \frac{1}{q!}
            \left[ 
            \int\displaylimits_{0}^{q} \frac{\prod_{\substack{k=0}}^{q} (x-2k) (x-k)}{(x-2n)(x-2m-1)} dx - 
            \int\displaylimits_{q}^{2q} \frac{\prod_{\substack{k=0}}^{q} (x-2k) (x-k-q)}{(x-2n)(x-2m-1)} dx
            \right]
            & \displaystyle m=\frac{q-1}{2},
            \\[2em]
            \displaystyle
            \frac{(-1)^{q}}{(2m+1-q)!(2q-2m-1)!} 
            \int\displaylimits_{q}^{2q} \frac{\prod_{\substack{k=0}}^{q} (x-2k) (x-k-q)}{(x-2n)(x-2m-1)} dx
            & m\geq q/2
        \end{cases}
    \end{align*}
    }
    for $n=0,\hdots,q$, $m=0,\hdots,q-1$.
\end{lemma}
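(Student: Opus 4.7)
The identity $\matop{U}^{r,q} = (\matop{G}^{r,q}_{\scriptscriptstyle\Delta\Delta})^{-1}\matop{G}^{r,q}_{\scriptscriptstyle\Delta\nabla}$ is simply the specialization of \eqref{eq:update_mat} to the one-dimensional continuous Galerkin setting in which the single subdomain is the entire grid, so the nontrivial content of the lemma is (a) the block-structured decomposition of the global Gramm matrices via $\oplus$ and $\ominus$, and (b) the explicit integral formulas for the element-local matrices $\matop{G}^{q}_{\scriptscriptstyle\Delta\Delta}$ and $\matop{G}^{q}_{\scriptscriptstyle\Delta\nabla}$.

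For (a), I would first observe that a coarse Lagrange basis function $\varphi_n$ is supported on the (one or two) coarse elements incident to node $n$, so $\langle\varphi_n,\varphi_m\rangle$ vanishes whenever $n$ and $m$ lie in disjoint coarse elements. An interior coarse node contributes to a single element-wise block, while a coarse node shared between two adjacent elements receives an additive contribution from each, which is exactly what the $\oplus$ operation encodes. For $\matop{G}^{r,q}_{\scriptscriptstyle\Delta\nabla}$, the key observation is that every surplus fine basis function is supported within a single coarse element — including the midpoint surplus node when $q$ is odd, which lies at the interface between the two fine sub-elements but does not cross coarse-element boundaries. Consequently the columns of $\matop{G}^{r,q}_{\scriptscriptstyle\Delta\nabla}$ do not overlap between blocks, while the rows, indexed by coarse nodes, still share a node at each coarse interface — exactly the $\ominus$ pattern.

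For (b), on the reference coarse element with $y = x/h$ placing coarse nodes at $0,2,\ldots,2q$, I would write the coarse Lagrange basis as
\begin{align*}
    \varphi_n^{c}(y) = \frac{(-1)^{q-n}}{2^q\,n!(q-n)!}\prod_{k\neq n}(y - 2k),
\end{align*}
whose prefactor follows by closing $\prod_{k\neq n}(2n-2k)$ in terms of factorials. The formula for $\matop{G}^{q}_{\scriptscriptstyle\Delta\Delta}$ then follows by multiplying two such basis functions, combining prefactors, and undoing the change of variables (which supplies the $2h$). For $\matop{G}^{q}_{\scriptscriptstyle\Delta\nabla}$ I would pair $\varphi_n^{c}$ with the fine Lagrange basis at the surplus node $y=2m+1$ and split into three cases according to the location of that node: $m\leq q/2-1$ (strictly inside the first fine sub-element $[0,q]$), $m\geq q/2$ (strictly inside the second sub-element $[q,2q]$, whence the shift appearing through $(x-k-q)$), and, only for $q$ odd, $m=(q-1)/2$ (at the internal fine-element interface $y=q$, where the basis is piecewise and must be split). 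Multiplying by $\varphi_n^{c}$ and including the $h$ from $dx = h\,dy$ produces the three displayed subcases.

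The main obstacle will be the $m=(q-1)/2$ case for odd $q$: on the first sub-element the fine basis is a Lagrange polynomial anchored at the right endpoint, while on the second it is anchored at the left endpoint of a shifted reference cell, and the two representations carry opposite signs. Reconciling this $(-1)^q$ discrepancy with the minus sign between the two integrals in the displayed formula, and confirming that both pieces indeed share the stated common prefactor $1/q!$, is the step that absorbs most of the care. The $q=0$ case sits outside the $\oplus/\ominus$ framework, since adjacent piecewise-constant bases have disjoint support: both Gramm matrices are simply diagonal with entries $2h$ and $h$, and one recovers $\matop{U}^{r,0}=\frac{1}{2}\eye{}$ in agreement with the piecewise-constant discussion preceding the lemma.
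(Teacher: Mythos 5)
Your proposal is correct and follows essentially the same route as the paper's proof: both reduce the lemma to direct computation of the element-local Gram integrals by writing the coarse and surplus Lagrange bases in product form, rescaling the variable, collecting the denominator products into signed factorials, and splitting into the three cases according to where the surplus node lies, while the identity $\matop{U}^{r,q}=(\matop{G}^{r,q}_{\scriptscriptstyle\Delta\Delta})^{-1}\matop{G}^{r,q}_{\scriptscriptstyle\Delta\nabla}$ and the $\oplus$/$\ominus$ assembly (which you justify explicitly from the supports, and which the paper leaves implicit) come from \eqref{eq:update_mat}. The step you flag as delicate is in fact immediate once set up: for odd $q$ one has $2m+1=q$, so the first-case prefactor becomes $1/\big((2m+1)!\,(q-2m-1)!\big)=1/q!$ and the second-case prefactor $(-1)^q/\big((2m+1-q)!\,(2q-2m-1)!\big)=-1/q!$, which is exactly how the paper obtains the common factor $1/q!$ and the minus sign between the two integrals.
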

\begin{proof}
    The case of $q=0$ is trivial.
    For $q>0$, we get from the definition of Gram matrices
    \begin{alignat*}{2}
        [\matop{G}_{\scriptscriptstyle\Delta\Delta}]_{n,m} &:= \langle\varphi_{0,2n},\varphi_{0,2m}\rangle,
        &&\quad n = 0,\hdots,q, \quad m = 0,\hdots,q,
        \\
        [\matop{G}_{\scriptscriptstyle\Delta\nabla}]_{n,m} &:= \langle\varphi_{0,2n},\varphi_{1,2m+1}\rangle,
        &&\quad n = 0,\hdots,q, \quad m = 0,\hdots,q-1,
    \end{alignat*}
    where $\varphi_{0}$, $\varphi_{1}$ are the Lagrange nodal basis functions on the grids $\grid_0$, $\grid_1$ respectively.
    For a uniform grid with $x_n=nh$, we get
    \begin{align*}
        \varphi_{0,2n}(x) &= \prod_{\substack{k=0\\k\neq n}}^{q} \frac{x-2kh}{2(n-k)h},
        \qquad n=0,\hdots,q,
        \quad x\in[0,2hq],
    \end{align*}
    and
    {\small
    \begin{align*}
        \varphi_{1,2m+1} &= 
        \begin{cases}
            \displaystyle{\prod_{\substack{k=0\\k\neq 2m+1}}^{q} \frac{x-kh}{(2m+1-k)h}},   
            & m=0,\hdots,\frac{q-1-(q-1)\%2}{2}, \quad x\in[0,qh],
            \\[1.5em]
            \displaystyle{\prod_{\substack{k=0\\k+q\neq 2m+1}}^{q} \frac{x-(k+q)h}{(2m+1-q-k)h}}, 
            & m=\frac{q-1+(q-1)\%2}{2},\hdots,q-1, \quad x\in[qh,2qh],
        \end{cases}
    \end{align*}
    }
    where $(q-1)\%2\in\{0,1\}$ is the division remainder, i.e., $(q-1)\%2=0$ for odd $q$ and $(q-1)\%2=1$ otherwise.
    Hence, we have 
    \begin{align*}
        [\matop{G}_{\scriptscriptstyle\Delta\Delta}]_{n,m}
        &= \int_0^{2hq} \prod_{\substack{k=0\\k\neq n}}^{q} \frac{x-2kh}{2(n-k)h} \prod_{\substack{k=0\\k\neq m}}^{q} \frac{x-2kh}{2(m-k)h} dx
        = 2h \int_0^{q} \prod_{\substack{k=0\\k\neq n}}^{q} \frac{x-k}{n-k} \prod_{\substack{k=0\\k\neq m}}^{q} \frac{x-k}{m-k} dx
        \\
        &= 2h \frac{(-1)^{n+m}}{n!m!(q-n)!(q-m)!} \int_0^{q} 
        \frac{\prod_{\substack{k=0}}^{q} (x-k)^2}{(x-n)(x-m)} dx.
    \end{align*}
    Similarly, for $m\leq q/2-1$, we have
    \begin{align*}
        [\matop{G}_{\scriptscriptstyle\Delta\nabla}]_{n,m} &:= 
        \int_0^{qh} 
        \prod_{\substack{k=0\\k\neq n}}^{q} \frac{x-2kh}{2(n-k)h} 
        \prod_{\substack{k=0\\k\neq 2m+1}}^{q} \frac{x-kh}{(2m+1-k)h} 
        dx
        \\
        &=
        \frac{h}{2^q} \int_0^{q} 
        \prod_{\substack{k=0\\k\neq n}}^{q} \frac{x-2k}{n-k} 
        \prod_{\substack{k=0\\k\neq 2m+1}}^{q} \frac{x-k}{2m+1-k} 
        dx
        \\
        &= \frac{h}{2^q} \frac{(-1)^{n+1}}{n!(2m+1)!(q-n)!(q-2m-1)!} 
        \int_0^{q} 
        \frac{\prod_{\substack{k=0}}^{q} (x-2k) (x-k)}{(x-2n)(x-2m-1)}
        dx
    \end{align*}
    and for $m\geq q/2$, we get
    \begin{align*}
        &[\matop{G}_{\scriptscriptstyle\Delta\nabla}]_{n,m} := 
        \int_{qh}^{2qh}
        \prod_{\substack{k=0\\k\neq n}}^{q} \frac{x-2kh}{2(n-k)h} 
        \prod_{\substack{k=0\\k+q\neq 2m+1}}^{q} \frac{x-(k+q)h}{(2m+1-q-k)h} 
        dx
        \\
        &= 
        \frac{h}{2^q} \int_{q}^{2q}
        \prod_{\substack{k=0\\k\neq n}}^{q} \frac{x-2k}{n-k} 
        \prod_{\substack{k=0\\k+q\neq 2m+1}}^{q} \frac{x-(k+q)}{2m+1-q-k} 
        dx
        \\
        &=
        \frac{h}{2^q} \frac{(-1)^{n+q+1}}{n!(2m+1-q)!(q-n)!(2q-2m-1)!} 
        \int\limits_{q}^{2q} 
        \frac{\prod_{\substack{k=0}}^{q} (x-2k) (x-k-q)}{(x-2n)(x-2m-1)}
        dx.
    \end{align*}
    For $q$ odd and $m=\frac{q-1}{2}$, we also have
    \begin{align*}
        &[\matop{G}_{\scriptscriptstyle\Delta\nabla}]_{n,m} := 
        \frac{h}{2^q} \int_0^{q} 
        \prod_{\substack{k=0\\k\neq n}}^{q} \frac{x-2k}{n-k} 
        \prod_{\substack{k=0\\k\neq 2m+1}}^{q} \frac{x-k}{2m+1-k} 
        dx
        \\
        &+
        \frac{h}{2^q} \int_{q}^{2q}
        \prod_{\substack{k=0\\k\neq n}}^{q} \frac{x-2k}{n-k} 
        \prod_{\substack{k=0\\k+q\neq 2m+1}}^{q} \frac{x-(k+q)}{2m+1-q-k} 
        dx
        \\
        &=
        \frac{h}{2^q} \frac{(-1)^{n+1}}{n!(q-n)!(2m+1)!(q-2m-1)!}
        \left[ 
        \int_0^{q} 
        \frac{\prod_{\substack{k=0}}^{q} (x-2k) (x-k)}{(x-2n)(x-2m-1)}
        dx
        \right.
        \\[1em]
        &+
        (-1)^q
        \left.
        \frac{(2m+1)!(q-2m-1)!}{(2m+1-q)!(2q-2m-1)!} 
        \int_{q}^{2q} 
        \frac{\prod_{\substack{k=0}}^{q} (x-2k) (x-k-q)}{(x-2n)(x-2m-1)}
        dx
        \right]
        \\[1em]
        &= \frac{h}{2^q} \frac{(-1)^{n+1}}{n!q!(q-n)!}
        \left[ 
        \int\limits_0^{q} 
        \frac{\prod_{\substack{k=0}}^{q} (x-2k) (x-k)}{(x-2n)(x-q)}
        dx
        - \int\limits_{q}^{2q} 
        \frac{\prod_{\substack{k=0}}^{q} (x-2k) (x-k-q)}{(x-2n)(x-q)}
        dx
        \right].
    \end{align*}
\end{proof}

\end{document}